\theoremstyle{plain}
\newtheorem{theorem}{Theorem}[section]
\newtheorem{proposition}[theorem]{Proposition}
\newtheorem{corollary}[theorem]{Corollary}
\newtheorem{lemma}[theorem]{Lemma}
\newtheorem*{theorem2}{Question}
\theoremstyle{definition}
\newtheorem{remark}[theorem]{Remark}
\newtheorem{example}[theorem]{Example}
\title[Commuting and Semi-commuting Operators]{On Commuting and Semi-commuting Positive Operators}
\author[N.~Gao]{Niushan Gao}
\address{Department of Mathematical and Statistical Sciences, University of Alberta,
Edmonton, AB, Canada T6G\,2G1}
\email{niushan@ualberta.ca}
\date{\today}
\subjclass[2010]{Primary: 47B65. Secondary: 47A15, 47B47}
\keywords{ideal irreducible operator, compact positive operator, commuting operators, semi-commuting operators}
\begin{document}

\begin{abstract}Let $K$ be a positive compact operator on a Banach lattice. We prove that if either $[K\rangle$ or $\langle K]$ is ideal irreducible then $[K\rangle=\langle K]=L_+(X)\cap \{K\}'$. We also establish the Perron-Frobenius Theorem for such operators $K$.
Finally we apply the results to answer questions in \cite{abr} and \cite{jbr}.
\end{abstract}
\maketitle

\section{Introduction}
Throughout this paper, $X$ always denotes a real Banach lattice with $\dim X>1$. Recall that a collection $\mathcal{C}$ of positive operators on $X$ is said to be \textbf{ideal irreducible} if 
there exist no non-trivial (that is, different from $\{0\} $ and
$X$) closed ideals invariant under each member of $\mathcal{C}$. In particular, a positive operator $T$ is called ideal irreducible if $\{T\}$ is ideal irreducible.
The classical Perron-Frobenius theory studies the peripheral spectrum of ideal irreducible operators on finite-dimensional spaces; see, e.g.~\cite[Chapter 8]{abr}. It has been extended to ideal irreducible
operators on infinite-dimensional Banach lattices by various authors; see, e.g.~\cite{scha2,sawa,ns,grob1,grob2,kit}, etc. In particular, one has the following.

\begin{theorem}[\cite{pag,ns}; cf.~Theorems~5.2 and 5.4, Chapter V of \cite{scha3}]\label{dsns}Let $K>0$ be compact and ideal irreducible.
\begin{enumerate}
\item\label{dsnsi1} The spectral radius $r(K)>0$, $\ker (r(K)-K)=\mathrm{Span}\{x_0\}$ for some quasi-interior point $x_0>0$ and $\ker (r(K)-K^*)=\mathrm{Span}\{x_0^*\}$ for some strictly positive functional $x_0^*$;
\item\label{dsnsi2} The peripheral spectrum $\sigma_{per}(K)=r(K)G$ where $G$ is the set of all $k$-th roots of unity for some $k\geq 1$, and each point in $\sigma_{per}(K)$ is a simple pole of the resolvent 
$R(\cdot,K)$ with one-dimensional eigenspace.
    \end{enumerate}
\end{theorem}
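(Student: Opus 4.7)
The plan is to follow the classical Krein--Rutman / de Pagter route, invoking ideal irreducibility at each step where positivity alone is not enough.

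First, I would establish $r(K)>0$. By de Pagter's theorem, a compact positive quasinilpotent operator on a Banach lattice of dimension $>1$ admits a non-trivial closed invariant ideal, so ideal irreducibility forces $r(K)>0$. Compactness then makes $r(K)$ a pole of $R(\cdot,K)$, hence an eigenvalue; a standard Krein--Rutman argument (compose the spectral projection with the cone and use compactness) produces $x_0>0$ with $Kx_0=r(K)x_0$, and applied to $K^*$, some $x_0^*>0$ with $K^*x_0^*=r(K)x_0^*$. I would then upgrade these: the closed ideal generated by $x_0$ is $K$-invariant and non-zero, hence equals $X$, making $x_0$ quasi-interior. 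For $x_0^*$, the set $N=\{x\in X:x_0^*(|x|)=0\}$ is a closed ideal, and the estimate $x_0^*(|Kx|)\le x_0^*(K|x|)=r(K)x_0^*(|x|)$ shows $K$-invariance, so irreducibility forces $N=\{0\}$; i.e., $x_0^*$ is strictly positive.

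For the one-dimensionality of $\ker(r(K)-K)$ in (i), if $z$ is a real eigenvector then $K|z|\ge |Kz|=r(K)|z|$, and pairing the non-negative vector $K|z|-r(K)|z|$ with the strictly positive $x_0^*$ gives equality, so $|z|$ itself is a positive eigenvector. Adding and subtracting the resulting identities yields $Kz_{\pm}=r(K)z_{\pm}$; a non-zero $z_{\pm}$ would be a positive eigenvector, hence quasi-interior by the argument above, which is incompatible with $z_+\wedge z_-=0$. Thus $z$ has constant sign, and a comparison $t:=\inf\{s>0:sx_0\ge z_+\}$ shows that $tx_0-z_+$ is a positive eigenvector which must vanish, giving $z\in\mathrm{Span}\{x_0\}$. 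The dual claim for $x_0^*$ is symmetric, using the strictly positive $x_0$ on the predual side.

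For (ii), I would complexify and take $\lambda=r(K)e^{i\theta}\in\sigma_{per}(K)$ with $Kf=\lambda f$ for some $0\neq f\in X_{\mathbb C}$. The same modulus inequality $K|f|\ge |\lambda f|=r(K)|f|$ combined with the $x_0^*$ trick forces $K|f|=r(K)|f|$, whence $|f|=cx_0$. Realizing the principal ideal generated by $x_0$ as a $C(\Omega)$-space through Kakutani representation, the identity $Kf=\lambda f$ translates into a multiplicative/rotational equation for the phase of $f$; from this one reads off both the cyclic group structure of $\{\theta:r(K)e^{i\theta}\in\sigma_{per}(K)\}$ and the one-dimensionality of each peripheral eigenspace. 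Finally, to rule out Jordan blocks at a peripheral $\lambda$, I would apply $x_0^*$ to the generalized eigenvector identity, exactly mirroring the argument at $r(K)$. The main obstacle is this last step: the translation via Kakutani and the extraction of the cyclic group structure demand careful bookkeeping, and uniformly excluding generalized eigenvectors over the whole peripheral spectrum (rather than just at $r(K)$) is the most delicate part of the proof.
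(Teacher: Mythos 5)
Your proposal is correct in outline and follows the classical route of de~Pagter and Niiro--Sawashima as presented in Schaefer's book (Theorems~5.2 and 5.4, Chapter~V of \cite{scha3}), which is precisely what the paper cites for Theorem~\ref{dsns}. The paper itself never re-proves Theorem~\ref{dsns} along these lines; instead, it derives it in Remark~\ref{rem} as a corollary of the much stronger Theorem~\ref{hyper}. The two routes are genuinely different. Your route: de~Pagter gives $r(K)>0$; Krein--Rutman gives positive eigenvectors for $K$ and $K^*$ at $r(K)$; ideal irreducibility upgrades these to quasi-interior and strictly positive; the modulus inequality $K|z|\ge r(K)|z|$ paired against $x_0^*$ shows the eigenspace at $r(K)$ is a sublattice, whence one-dimensional; and for the peripheral spectrum, a Kakutani representation and a signum/phase analysis of $Kf=\lambda f$ on $X_{\mathbb C}$ produce the cyclic group structure and rule out Jordan blocks. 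The paper's route (Theorem~\ref{hyper} plus Remark~\ref{rem}): Lemma~\ref{turo} gives $r(K)>0$; Lemma~\ref{pRiesz} (Radjavi's dichotomy for peripherally Riesz operators) shows that a subsequence of powers of $K$ converges to the peripheral spectral projection $P$ (the nilpotent alternative being excluded by irreducibility); $PX$ is then a finite-dimensional lattice subspace isomorphic to $\mathbb R^r$, and $K|_{PX}$ is a positive operator with a positive inverse, hence a permutation; the cyclic group structure, simplicity of the poles, and one-dimensionality of the peripheral eigenspaces all drop out at once from this matrix picture, with no complexification or Kakutani representation needed, and the argument extends verbatim to the super-commutant setting. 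So the paper's method is more uniform and scales to the broader hypotheses of Theorem~\ref{hyper}, at the cost of depending on Lemma~\ref{pRiesz}; yours is more self-contained but, as you note, requires careful bookkeeping in the peripheral-spectrum analysis. One small caveat in your sketch: the comparison $t:=\inf\{s>0: sx_0\ge z_+\}$ is not obviously finite in an infinite-dimensional lattice; the cleaner conclusion is that $\ker(r(K)-K)$ is a finite-dimensional sublattice (so lattice-isomorphic to some $\mathbb R^k$), and if $k\ge2$ one obtains two disjoint positive eigenvectors, each quasi-interior by your earlier argument, which is impossible; hence $k=1$.
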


The fact that $r(K)>0$ is known as de Pagter's theorem; cf.~\cite{pag}.
It was extended by Abramovich, Aliprantis and Burkinshaw~\cite{sab,sab2,sab1} to commuting and semi-commuting positive operators that possess ideal irreducibility and compactness in one or 
another sense (cf.~Chapters 9 and 10 of \cite{abr}).
We write $T\leftrightarrow S$ if $T$ commutes with $S$.
The following is Corollary~9.28 of \cite{abr}.
\begin{theorem}[\cite{abr}]\label{aab1}Let $T,S,K>0$ be such that $T\leftrightarrow S\leftrightarrow K$, $T$ is ideal irreducible and $K$ is compact. Then $r(S)>0$.
\end{theorem}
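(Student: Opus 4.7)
My plan is to exploit the ideal irreducibility of $T$ to force strict positivity of $S$, and then use the compactness of $K$ (via the commutation $S\leftrightarrow K$) to extract a positive spectral radius for $S$.

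\textbf{Step 1 (strict positivity of $S$).} The null ideal $N_S:=\{x\in X:S|x|=0\}$ is a closed $T$-invariant ideal: for $x\in N_S$, $S|Tx|\le ST|x|=TS|x|=0$, so $Tx\in N_S$. Since $S\ne 0$, $N_S\ne X$; ideal irreducibility of $T$ forces $N_S=\{0\}$. Thus $Sx>0$ whenever $x>0$, and inductively $S^n x>0$ for all $n\ge 1$.

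\textbf{Step 2 (use compactness of $K$).} The operator $SK=KS$ is compact, positive, and nonzero (otherwise Step 1 would force $K=0$). Assuming $r(K)>0$, Krein--Rutman furnishes a positive eigenvector $u>0$ of $K$ at eigenvalue $r(K)$, with finite-dimensional eigenspace $E:=\ker(r(K)-K)$; since $S\leftrightarrow K$, the space $E$ is $S$-invariant, and by Step 1, $S^n u>0$ for every $n$, so $S|_E$ cannot be nilpotent on the finite-dimensional space $E$. Hence $r(S|_E)>0$ and $r(S)\ge r(S|_E)>0$. If instead $r(K)=0$, one would derive a contradiction by applying an extension of de Pagter's theorem (from the semi-commuting machinery of \cite{abr}) to the compact operator $SK$ together with the chain $T\leftrightarrow S\leftrightarrow K$: the quasi-nilpotency of $SK$ produces a nontrivial closed $SK$-invariant ideal whose $T$- and $S$-iterated closures must, by $T$'s ideal irreducibility, fill $X$, leading to a contradiction.

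\textbf{Main obstacle.} The principal difficulty is the case $r(K)=0$: the hypothesis only gives $T\leftrightarrow S$ and $S\leftrightarrow K$, not $T\leftrightarrow K$, so one cannot directly invoke the commuting-pair version of de Pagter's theorem for the pair $(T,K)$. The ideal irreducibility of $T$ must instead be channeled through $S$ to constrain $K$, which is precisely the semi-commuting ideal irreducibility theory developed in Chapters 9--10 of \cite{abr}; delicate bookkeeping of the null ideals $N_S$, $N_K$, $N_{SK}$, their carriers, and their $T$- and $S$-orbits is the technical heart of the argument.
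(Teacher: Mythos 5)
Your Step 1 (strict positivity of $S$ via the $T$-invariant null ideal $N_S$) is correct, and your Step 2 handles the case $r(K)>0$ correctly. The genuine gap is the case $r(K)=0$, and here the strategy you sketch is not merely incomplete but misdirected: you aim to ``derive a contradiction'' from $r(K)=0$, but the hypotheses do not exclude this case. For instance, take $S=I$ and $K$ any quasi-nilpotent compact positive operator; then $T\leftrightarrow S\leftrightarrow K$ holds trivially for every ideal irreducible $T>0$, the conclusion $r(S)=1>0$ is true, and yet $r(K)=0$. So no contradiction is available in that branch, and your argument produces nothing about $r(S)$.

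The fix, which the paper implements in the (stronger) Proposition~\ref{perisc3}, is not to chase a contradiction from $r(K)=0$ but to replace $K$ by a compact \emph{and} ideal irreducible operator built from $T$ and $K$ (not from $S$): after scaling so that $\|T\|<1$, set $\widetilde{K}:=\bigl(\sum_{n=1}^\infty T^n\bigr)K\bigl(\sum_{n=1}^\infty T^n\bigr)$. Since $\sum_{n=1}^\infty T^n$ is strongly expanding (Corollary~9.14 of \cite{abr}), $\widetilde{K}$ is strongly expanding, compact and ideal irreducible, so $r(\widetilde{K})>0$ by de Pagter's theorem, and Theorem~\ref{dsns}\eqref{dsnsi1} then supplies a quasi-interior point $x_0$ and a strictly positive functional $x_0^*$ spanning the one-dimensional eigenspaces of $\widetilde{K}$ and $\widetilde{K}^*$ at $r(\widetilde{K})$. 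Because $S$ commutes with both $T$ and $K$, it commutes with $\widetilde{K}$, hence $Sx_0=\lambda_S x_0$ and $S^*x_0^*=\lambda_S x_0^*$ for some $\lambda_S\in\mathbb{R}$; since a positive operator cannot vanish on a quasi-interior point, $Sx_0>0$ and $\lambda_S>0$, so $r(S)\ge\lambda_S>0$. This construction bypasses the dichotomy on $r(K)$ entirely, which is precisely what your Krein--Rutman approach cannot do once $r(K)=0$.
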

The following can be found in Theorems~10.25 and 10.26 in \cite{abr}.

\begin{theorem}[\cite{abr}]\label{aab2}Let $T,K>0$ be such that $K$
is compact. Then $T$ has a non-trivial invariant closed ideal if one of the following is satisfied:
\begin{enumerate}
\item\label{aab2i1} $TK\geq KT$ and $\liminf_n||K^nx||^{\frac{1}{n}}=0$ for some $x>0$;
\item\label{aab2i2} $TK\leq KT$ and $K$ is quasi-nilpotent.
\end{enumerate}
\end{theorem}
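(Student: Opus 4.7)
My plan is to argue by contradiction in both parts: suppose $T$ is ideal irreducible and derive a contradiction by applying the Perron--Frobenius-type results for $K$ announced in the abstract.

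In part (i), the assumption $TK \geq KT$ says exactly that $T$ lies in the super-right-commutant $[K\rangle$. Because $T$ itself is ideal irreducible, the entire collection $[K\rangle$ is ideal irreducible: any closed ideal invariant under every member of $[K\rangle$ is in particular $T$-invariant, and hence trivial. The paper's main theorem (stated in the abstract) then yields $[K\rangle = \langle K] = L_+(X) \cap \{K\}'$ and supplies the conclusions of Theorem~\ref{dsns} for $K$ itself; in particular $r(K) > 0$ and there is a strictly positive functional $x_0^*$ with $K^* x_0^* = r(K)\, x_0^*$. For the vector $x > 0$ furnished by the hypothesis, strict positivity of $x_0^*$ gives $\langle x_0^*, x \rangle > 0$, and
\[
\|K^n x\|\,\|x_0^*\| \;\geq\; \langle x_0^*, K^n x\rangle \;=\; r(K)^n \langle x_0^*, x\rangle.
\]
Taking $n$-th roots and letting $n \to \infty$ yields $\liminf_n \|K^n x\|^{1/n} \geq r(K) > 0$, contradicting the hypothesis.

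Part (ii) follows the same outline: $TK \leq KT$ places $T$ in $\langle K]$, which is therefore ideal irreducible by the same transfer-of-irreducibility observation, and the Perron--Frobenius extension again produces $r(K) > 0$, directly contradicting the quasi-nilpotence of $K$.

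The genuinely difficult work, namely establishing that ideal irreducibility of $[K\rangle$ or $\langle K]$ already forces the full spectral picture of Theorem~\ref{dsns} for $K$, is subsumed in the paper's main theorem and not in this deduction. Once that is granted the reduction is essentially formal; the only mildly delicate point is the Krein--Rutman-style observation used in part (i), namely that a strictly positive dual eigenvector controls the local spectral radius of every positive $x$ from below by $r(K)$, and this is handled by the single-line duality estimate above.
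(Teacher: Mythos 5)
Your proof is correct, but note that the paper does not itself prove Theorem~\ref{aab2}: it is quoted as background from the Abramovich--Aliprantis book. What you have written is, however, essentially the argument the paper uses later to establish its \emph{strengthening} of this theorem, Proposition~\ref{sver}\eqref{sveri1} (via Proposition~\ref{app1}): in both cases $T$ semi-commuting with $K$ places $T$ in the relevant super-commutant ($[K\rangle$ or $\langle K]$), which is therefore ideal irreducible whenever $T$ is, and Lemma~\ref{turo} together with Theorem~\ref{hyper}\eqref{hyperi4} then contradicts $r(K)=0$ or, more generally, $\liminf_n\|K^nx\|^{1/n}<r(K)$. The derivation is not circular --- Lemma~\ref{turo} and Theorem~\ref{hyper} are proved without invoking Theorem~\ref{aab2} --- but it is logically downstream of the paper's new machinery rather than the textbook proof the citation points to.

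Two small remarks. Once you have Theorem~\ref{hyper}\eqref{hyperi4} you need not redo the duality estimate with $x_0^*$, since that item already asserts $\lim_n\|K^nx\|^{1/n}=r(K)>0$ for every $x>0$, which contradicts the hypothesis of part (i) outright. Also, ``supplies the conclusions of Theorem~\ref{dsns} for $K$'' overstates slightly: Theorem~\ref{hyper} does not make $K$ ideal irreducible and gives only a partial spectral picture (e.g.\ the peripheral eigenspace can be higher-dimensional); what it does supply --- and all you use --- is $r(K)>0$, a strictly positive eigenfunctional $x_0^*$ at $r(K)$, and the local spectral radius formula.
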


Extensions of results of this style to collections of positive operators have also been considered by various authors; see, e.g.~\cite{drn2001,at,hali}, etc. 
For a positive operator $T$, we define as in \cite{abr} the \textbf{super right-commutant} of $T$ by $[T\rangle:=\{S\geq 0:ST\geq TS\}$ and the \textbf{super left-commutant} by
$\langle T]:=\{S\geq 0:ST\leq TS\}$. The following can be deduced from Theorem~4.3 of \cite{drn2001}.
\begin{theorem}[\cite{drn2001}]\label{drnl}Let $K>0$ be compact. If $\lim_n||K^nx||^{\frac{1}{n}}=0$ for some $x>0$, then $[K\rangle$ has a common non-trivial invariant closed ideal.
\end{theorem}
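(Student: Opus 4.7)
The plan is to identify a canonical cone of vectors associated with $K$ that is automatically $[K\rangle$-invariant, generate a closed ideal from it, and finally use compactness of $K$ together with Krein--Rutman to force the ideal to be proper.

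First, set $J=\{y\in X_+:\lim_n\|K^ny\|^{1/n}=0\}$. The commutation $SK\geq KS$ propagates by induction to $SK^n\geq K^nS$ for every $n\geq 0$, each step using positivity of $K$ to multiply the previous inequality on the appropriate side. Hence for $y\in J$ and $S\in[K\rangle$,
\[
K^n(Sy)\leq SK^ny,\qquad \|K^nSy\|^{1/n}\leq\|S\|^{1/n}\|K^ny\|^{1/n}\to 0,
\]
so $Sy\in J$. Let $I$ denote the closed ideal generated by $J$. If $|z|\leq\sum_{i=1}^k y_i$ with $y_i\in J$, then $|Sz|\leq S|z|\leq\sum_i Sy_i$ sits in the order ideal generated by $J$, and continuity of $S$ pushes this to the closure. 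Thus $I$ is $[K\rangle$-invariant, and $I\neq\{0\}$ because $x\in J$ with $x>0$.

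The crux is showing $I\neq X$. Assume first $r(K)>0$. The adjoint $K^*$ is a compact positive operator on $X^*$ with the same spectral radius, so Krein--Rutman supplies a nonzero positive $\phi\in X^*$ with $K^*\phi=r(K)\phi$. For $y\in J$,
\[
r(K)^n\langle y,\phi\rangle=\langle K^ny,\phi\rangle\leq\|\phi\|\,\|K^ny\|,
\]
and dividing by $r(K)^n$ and letting $n\to\infty$, using $\|K^ny\|^{1/n}/r(K)\to 0$, forces $\langle y,\phi\rangle=0$. Positivity of $\phi$ then yields $\phi(|z|)=0$ on the order ideal generated by $J$, and continuity extends this to $I$. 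Since $\phi\neq 0$, the ideal $I$ is proper.

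The main obstacle is the quasi-nilpotent case $r(K)=0$, where $J=X_+$ and no Krein--Rutman functional exists, so the above construction collapses. Here one must refine $J$: the idea is to work with the resolvent series $R(\lambda,K)=\sum_{n\geq 0}\lambda^{-n-1}K^n$, which converges for every $\lambda>0$, and to run a de Pagter--style cone/fixed-point argument on the set $\{y\geq 0:Ky\leq\lambda y\}$, now preserved by $[K\rangle$ via $SK^n\geq K^nS$, so that compactness of $K$ produces a proper $[K\rangle$-invariant closed ideal. This is exactly the content of Theorem~4.3 of \cite{drn2001}, from which the stated result is then read off.
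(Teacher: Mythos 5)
The paper does not actually prove this theorem; it simply states that it ``can be deduced from Theorem~4.3 of \cite{drn2001}'' and later supersedes it with Proposition~\ref{app1}, whose proof runs through the peripherally Riesz machinery of Theorem~\ref{hyper} together with Lemma~\ref{turo}. Your attempt is therefore an independent route, and the $r(K)>0$ half of it is correct and genuinely nice: the cone $J=\{y\geq 0:\lim_n\|K^ny\|^{1/n}=0\}$ is $[K\rangle$-invariant because $SK^n\geq K^nS$ gives $0\leq K^nSy\leq SK^ny$, so the closed ideal $I$ it generates is $[K\rangle$-invariant and contains $x>0$; and a Krein--Rutman eigenfunctional $\phi>0$ of the compact positive operator $K^*$ with $K^*\phi=r(K)\phi$ annihilates $J$, hence $I$, so $I\neq X$. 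This is more elementary than the paper's own spectral-projection argument for the stronger Proposition~\ref{app1}\eqref{app1i1}, and worth having.

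The gap is the case $r(K)=0$, where $J=X_+$ and the construction degenerates, as you note. What you offer there is not a proof: you propose to ``run a de Pagter--style cone/fixed-point argument on $\{y\geq 0:Ky\leq\lambda y\}$'' and then simply cite Theorem~4.3 of \cite{drn2001}. That cone is indeed $[K\rangle$-invariant (if $Ky\leq\lambda y$ and $S\in[K\rangle$, then $KSy\leq SKy\leq\lambda Sy$), but upgrading it to an invariant closed \emph{ideal} is precisely the nontrivial content of the de Pagter and Drnov\v{s}ek arguments and cannot be read off. The clean way to close this case, in the same spirit as the paper's own Lemma~\ref{turo}, is: when $r(K)=0$, every element $S_1KS_2$ of the ideal of $[K\rangle$ generated by $K$ satisfies $S_1KS_2\leq S_1S_2K$, and since $S_1S_2\in[K\rangle$ this forces $r(S_1KS_2)\leq r(S_1S_2K)\leq r(K)\,r(S_1S_2)=0$; thus that ideal consists of quasi-nilpotent compact positive operators, is ideal reducible by Drnov\v{s}ek's theorem (Corollary~10.47 of \cite{abr}), and reducibility passes to $[K\rangle$ by Proposition~2.1 of \cite{drn1}. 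Replace the hand-waving with that chain of citations and your proof is complete and genuinely different from (and partly more elementary than) anything in the paper.
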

\medskip

This paper aims to study the properties of compact positive operators $K$ such that either $[K\rangle$ or $\langle K]$ is ideal irreducible.
In Section~\ref{sec3}, we establish the analogous version of Theorem~\ref{dsns} for such operators $K$. We also prove that, in this case,
$[K\rangle=\langle K]=L_+(X)\cap\{K\}'$ and $\lim_n||K^nx||^{\frac{1}{n}}=r(K)>0$ for all $x>0$.
We also prove existence of positive eigenvectors of positive operators $S$ in the following three chains: $T\leftrightarrow K\leftrightarrow S$, $S\leftrightarrow T\leftrightarrow K$ and $T\leftrightarrow S\leftrightarrow K$,
where $T>0$ is ideal irreducible and $K>0$ is compact.\par

In Section~\ref{sec4}, we provide some applications of the results in Section~\ref{sec3}. In particular, we prove that, for a compact operator $K>0$, if $TK\leq KT$ and $\liminf_n||K^nx||^{\frac{1}{n}}=0$ for some $x>0$, then $T$ has a
non-trivial invariant closed ideal. This improves Theorem~\ref{aab2}\eqref{aab2i2} and answers a question asked in \cite{abr}. As another application, we prove that, for two positive operators 
$T$ and $K$ with $K$ being compact, if they semi-commute then their commutator is quasi-nilpotent. This answers a question proposed in \cite{jbr}.

\section{Preliminaries}\label{sec2}

The notations and terminology used in this paper are standard. We refer the reader to \cite{abr} for unexplained terms.
For $T\in L(X)$, we write $\sigma(T)$ for the spectrum of $T$, $r(T)$ for the spectral radius of $T$ and $\sigma_{per}(T):=\{\lambda\in \sigma(T):|\lambda|=r(T)\}$ for the peripheral spectrum of $T$. If $T$ is a non-zero positive operator, we write $T>0$. A positive operator is called \textbf{strictly positive} if it does not vanish on non-zero positive vectors. We say that two operators \textbf{semi-commute} if their commutator is positive or negative. 
For $A\subset X$, we write $I_A$ for the ideal generated by $A$ in $X$.
A vector $x>0$ is called a \textbf{quasi-interior point} if its generated ideal 
$I_x$ is norm dense in $X$, or equivalently, $\widehat{x} $ acts as a strictly positive functional on $X^*$.
\par
\medskip
We will need some technical lemmas. An operator $T\in L(X)$ is called \textbf{peripherally Riesz} if $r(T)>0$ and $\sigma_{per}(T)$ is a spectral set with finite-dimensional spectral subspace.
The following fact can be deduced by applying Lemma~1 of \cite{radj} to the restriction of $T$ to its spectral subspace for $\sigma_{per}(T)$.
\begin{lemma}[\cite{gt2}]\label{pRiesz}Let $T\in L(X)$ be peripherally Riesz and $r(T)=1$. Then either there exists $n_j\uparrow \infty$ such that $T^{n_j}$ converges to the spectral projection of $T$ for $\sigma_{per}(T)$, 
or there exist $n_j\uparrow \infty$ and positive reals $c_j\downarrow 0$ such that $c_jT^{n_j}$ converges to a non-zero finite-rank nilpotent operator.
 \end{lemma}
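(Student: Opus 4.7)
The plan is to reduce to finite-dimensional linear algebra via the Riesz decomposition for the spectral set $\sigma_{per}(T)$. Let $P$ be the associated spectral projection; then $X = Y \oplus Z$ with $Y = PX$ finite-dimensional, both summands $T$-invariant, $\sigma(T|_Y) = \sigma_{per}(T)$ on the unit circle, and $r(T|_Z) < 1$. The $Z$-component trivializes: $\|T^n|_Z\| \to 0$ geometrically, so for any bounded scalars $c_n$ one has $c_nT^n|_Z \to 0$. It remains to analyze the powers of $T|_Y$, an invertible matrix on a finite-dimensional space whose spectrum sits on the unit circle.

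For this I would use the Jordan decomposition $T|_Y = D + N$ with $D$ diagonalizable, $N$ nilpotent of index $d \ge 1$, and $DN = ND$, and split on $d$. If $d = 1$, then $T|_Y = D$ is similar to a unitary, so $\{D^n : n \ge 0\}$ lies in a compact abelian subgroup of $GL(Y)$ whose identity $I|_Y$ is a limit point. Choosing $n_j \uparrow \infty$ with $D^{n_j} \to I|_Y$ and combining with $T^{n_j}|_Z \to 0$ yields $T^{n_j} \to P$, which is the first alternative of the lemma.

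If $d \ge 2$, I would expand $T^n|_Y = \sum_{k=0}^{d-1}\binom{n}{k}D^{n-k}N^k$; the leading term grows like $n^{d-1}$. With the scaling $c_n = \binom{n}{d-1}^{-1} \downarrow 0$ the subdominant terms vanish, and along a subsequence on which $D^{n_j - d + 1} \to E$ for some $E$ in the compact group closure of $\{D^m\}$, one has $c_{n_j}T^{n_j}|_Y \to E N^{d-1}$. This limit is non-zero (since $E$ is invertible and $N^{d-1}\neq 0$) and nilpotent (since $E$ commutes with $N$ and $N^{2(d-1)} = 0$ for $d \ge 2$), and, extended by zero on $Z$, is finite-rank on $X$. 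This is the second alternative.

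The only step that needs care is the subsequence extraction, where I rely on $D$ being similar to a unitary so that $\overline{\{D^n\}}$ is a compact subgroup of $GL(Y)$ containing the identity; the rest is routine finite-dimensional algebra, and amounts to what Lemma~1 of \cite{radj} packages once one restricts to the spectral subspace $Y$.
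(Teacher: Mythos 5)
Your proof is correct and is precisely the argument that the paper delegates to Lemma~1 of \cite{radj}, applied to the restriction of $T$ to its spectral subspace for $\sigma_{per}(T)$: the Riesz decomposition, the Jordan--Chevalley split $D+N$ on the finite-dimensional part, the compactness of $\overline{\{D^n\}}$, and the binomial rescaling that isolates $N^{d-1}$ are exactly what that lemma packages.
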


The following easy lemma is taken from \cite{gao}; we provide the proof for the convenience of the reader.
\begin{lemma}[\cite{gao}]\label{scc}Let $U,V\in L(X)$ be semi-commuting. Suppose $Ux_0=\lambda x_0$ and $U^*x_0^*=\lambda x_0^*$ for some quasi-interior point
$x_0>0$ and some strictly positive functional $x_0^*$. Then $UV=VU$ 
\end{lemma}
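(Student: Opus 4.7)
The plan is to show that the commutator $W := UV - VU$ vanishes, using the two eigenvector conditions to pin down $W$ from both sides. By the semi-commuting hypothesis, either $W \geq 0$ or $W \leq 0$; replacing $W$ by $-W$ if necessary, I may assume $W \geq 0$.

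First I would sandwich $W$ between the eigenvectors. Computing directly,
\[
x_0^*(Wx_0) \;=\; x_0^*(UVx_0) - x_0^*(VUx_0) \;=\; (U^*x_0^*)(Vx_0) - \lambda\, x_0^*(Vx_0) \;=\; \lambda\, x_0^*(Vx_0) - \lambda\, x_0^*(Vx_0) \;=\; 0.
\]
So the positive functional $W^*x_0^*$ (positive because $W\geq 0$ and $x_0^*\geq 0$) annihilates $x_0$.

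Next I would exploit that $x_0$ is quasi-interior, which (as recalled in Section~\ref{sec2}) means $\widehat{x_0}$ is a strictly positive functional on $X^*$. Applied to the positive functional $W^*x_0^*$, the vanishing $(W^*x_0^*)(x_0) = 0$ forces $W^*x_0^* = 0$. Equivalently, $x_0^*(Wy) = 0$ for every $y \in X$.

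Finally, for any $y \geq 0$ the vector $Wy$ lies in $X_+$ since $W\geq 0$, and $x_0^*(Wy)=0$. Because $x_0^*$ is strictly positive, this forces $Wy = 0$. By linearity (writing $y = y^+ - y^-$) we conclude $W \equiv 0$, i.e.\ $UV = VU$. There is no real obstacle here: the argument is just a two-sided positivity squeeze, and the whole content is that a positive operator killed by a strictly positive functional on the positive cone must be zero, combined with the dual fact for quasi-interior points.
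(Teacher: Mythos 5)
Your proof is correct and uses the same key computation $x_0^*((UV-VU)x_0)=0$ together with the same two positivity hypotheses as the paper. The only difference is the order: you pass to the dual and first conclude $W^*x_0^*=0$ from strict positivity of $\widehat{x_0}$ on $X^*$, then use strict positivity of $x_0^*$ to kill $W$ on the positive cone, whereas the paper first concludes $Wx_0=0$ from strict positivity of $x_0^*$ and then uses quasi-interiority of $x_0$ (density of $I_{x_0}$) to get $W=0$; these are mirror-image arguments.
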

\begin{proof}
Note that $x_0^*((UV-VU)x_0)=(U^*x_0^*)(Vx_0)-x_0^*(VUx_0)=\lambda x_0^*(Vx_0)-x_0^*(V\lambda x_0)=0$. Since $x_0^*$ is strictly positive, we have $(UV-VU)x_0=0$. 
Thus $x_0 $ being quasi-interior yields $UV-VU=0$.\end{proof}

\begin{lemma}\label{locaq}Suppose $T>0$.
\begin{enumerate}
 \item\label{locaqi1}If $Tx_0=\lambda x_0$ for some quasi-interior $x_0>0$ then $\liminf_n||T^{n*}x^*||^{\frac{1}{n}}\geq \lambda$ for any $x^*>0$. If, in addition,
$r(T)$ is an eigenvalue of $T^*$, then $\lambda=r(T)$;
 \item\label{locaqi2}If $T^*x_0^*=\lambda x_0^*$ for some strictly positive $x_0^*>0$ then $\liminf_n||T^{n}x||^{\frac{1}{n}}\geq \lambda$ for any $x>0$.
If, in addition, $r(T)$ is an eigenvalue of $T$, then $\lambda=r(T)$.
\end{enumerate}
\end{lemma}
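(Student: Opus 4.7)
The plan is to prove (i) directly from the eigenvalue relation by pairing $T^n x_0$ against an arbitrary $x^* > 0$, then deduce (ii) by the dual argument.

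For (i), I would start from $T^n x_0 = \lambda^n x_0$. Since $x_0$ is quasi-interior, $\widehat{x_0}$ is strictly positive on $X^*$, so $\langle x^*, x_0\rangle > 0$ for every $x^* > 0$. The identity $\langle T^{n*}x^*, x_0\rangle = \langle x^*, T^n x_0\rangle = \lambda^n \langle x^*, x_0\rangle$ combined with the estimate $\langle T^{n*}x^*, x_0\rangle \leq \|T^{n*}x^*\|\cdot\|x_0\|$ yields
\[
\|T^{n*}x^*\|^{\frac{1}{n}} \geq \lambda\cdot\Bigl(\tfrac{\langle x^*, x_0\rangle}{\|x_0\|}\Bigr)^{\frac{1}{n}}.
\]
Letting $n\to\infty$ and using that the bracketed factor tends to $1$ (as $\langle x^*,x_0\rangle > 0$) gives $\liminf_n\|T^{n*}x^*\|^{1/n}\geq \lambda$. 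For the second assertion of (i), I first note $\lambda\geq 0$: pick any $x^*>0$; then $\lambda\langle x^*,x_0\rangle = \langle T^*x^*,x_0\rangle\geq 0$, and positivity of $\langle x^*,x_0\rangle$ forces $\lambda\geq 0$. Since $\lambda\in\sigma(T)$, we get $\lambda\leq r(T)$. For the reverse inequality, take a positive eigenvector $y^*>0$ with $T^*y^* = r(T)y^*$; then $\|T^{n*}y^*\|^{1/n} = r(T)\|y^*\|^{1/n}\to r(T)$, and combining with the lower bound just established (applied to $x^*=y^*$) gives $r(T)\geq\lambda$. Hence $\lambda = r(T)$.

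For (ii), I would run the symmetric argument with the roles of $T$ and $T^*$ swapped. Strict positivity of $x_0^*$ guarantees $\langle x_0^*, x\rangle > 0$ for every $x>0$, so from $\langle x_0^*, T^n x\rangle = \lambda^n\langle x_0^*, x\rangle$ one obtains $\|T^n x\|^{1/n}\geq \lambda\cdot(\langle x_0^*,x\rangle/\|x_0^*\|)^{1/n}\to\lambda$, giving the first claim. Positivity of $\lambda$ follows from the same pairing, and if $Ty=r(T)y$ for some $y>0$, the bound applied at $x=y$ yields $r(T)\geq\lambda$, while $\lambda\in\sigma(T^*)=\sigma(T)$ forces $\lambda\leq r(T)$.

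There is no real obstacle here: the only delicate points are invoking quasi-interiority (respectively, strict positivity) at exactly the right place to keep the pairings strictly positive, and remembering that $r(T) = r(T^*)$ so that the eigenvalue hypothesis on the dual side transfers across in the final comparison.
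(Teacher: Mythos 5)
The first half of each part — the estimate $\liminf_n\|T^{n*}x^*\|^{1/n}\geq\lambda$ and its dual — is correct and is exactly the paper's argument: pair $T^n x_0=\lambda^n x_0$ against $x^*$, use $\langle x^*,x_0\rangle>0$, take $n$-th roots, and let $n\to\infty$.

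Your treatment of the ``if, in addition'' clause, however, has two genuine gaps. First, the hypothesis is only that $r(T)$ is an eigenvalue of $T^*$; it does not hand you a \emph{positive} eigenvector $y^*>0$. The paper is careful not to assume this: it takes an arbitrary eigenvector $x^*\neq 0$, passes to $|x^*|$, observes $T^*|x^*|\geq |T^*x^*|=r(T)|x^*|\geq\lambda|x^*|$, and then shows $(T^*|x^*|-\lambda|x^*|)(x_0)=0$; since this nonnegative functional vanishes at a quasi-interior point, it is zero, whence $\lambda|x^*|=r(T)|x^*|$ and $\lambda=r(T)$. Only \emph{a posteriori} does one learn that $|x^*|$ is a positive eigenvector.

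Second, and more seriously, even granting $y^*>0$, the argument you give is circular. You announce that you will prove ``the reverse inequality'', but $\|T^{n*}y^*\|^{1/n}=r(T)\|y^*\|^{1/n}\to r(T)$ combined with the lower bound $\liminf_n\|T^{n*}y^*\|^{1/n}\geq\lambda$ yields $r(T)\geq\lambda$ — which is precisely the direction you already obtained from $\lambda\in\sigma(T)$. It gives nothing new. What is needed is $r(T)\leq\lambda$, and for that the $\liminf$ bound is the wrong tool. If you do have a positive eigenvector $y^*>0$, the correct move is a direct pairing with $x_0$: $r(T)\,y^*(x_0)=(T^*y^*)(x_0)=y^*(Tx_0)=\lambda\,y^*(x_0)$, and since $y^*(x_0)>0$ (as $x_0$ is quasi-interior), $\lambda=r(T)$. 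Without a positive eigenvector in hand, you need the paper's modulus trick. The same two repairs are needed in your part (ii).
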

\begin{proof}We only prove \eqref{locaqi1}; the proof of \eqref{locaqi2} is similar. Suppose that $x^*>0$. Then $x_0$ being quasi-interior implies $x^*(x_0)>0$. Note that $\lambda^nx^*(x_0)=x^*(T^nx_0)=T^{n*}x^*(x_0)\leq ||T^{n*}x^*||\,||x_0||$.
Thus $||T^{n*}x^*||^{\frac{1}{n}}\geq \lambda \sqrt[n]{x^*(x_0)/||x_0||}$. Letting $n\rightarrow \infty$, we have $\liminf_n||T^{n*}x^*||^{\frac{1}{n}}\geq \lambda$.\par
Now suppose $T^*x^*=r(T)x^*$ for some $x^*\neq 0$. Then $\lambda |x^*|\leq r(T)|x^*|\leq T^*|x^*|$. Note also that $(T^*|x^*|-\lambda |x^*|)(x_0)=|x^*|(Tx_0)-\lambda |x^*|(x_0)=0$. Hence $x_0$ being quasi-interior yields $T^*|x^*|=\lambda |x^*|$. It follows that $\lambda |x^*|=r(T) |x^*|=T^*|x^*|$. In particular, $\lambda =r(T)$.
\end{proof}

Note that for a positive operator $T$, both $[T\rangle$ and $\langle T]$ are multiplicative semigroups. We also need the following fact.
Suppose that $T$ and $S$ are two semi-commuting positive operators. If $TS\leq ST$ then $(TS)^n\leq S^nT^n$ for all $n$; if $ST\leq TS$ then $(ST)^n\leq T^nS^n$ for all $n$. In either case, we have $r(TS)=r(ST)\leq r(T)r(S)$.
\begin{lemma}\label{turo}Let $K>0$ be compact such that either $[K\rangle$ or $\langle K]$ is ideal irreducible. Then $r(K)>0$.
 \end{lemma}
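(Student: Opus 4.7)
The plan is to argue by contradiction: I would suppose $r(K)=0$, so that $K$ is quasi-nilpotent and $\|K^n\|^{1/n}\to 0$; in particular $\lim_n\|K^n x\|^{1/n}=0$ for every $x>0$ (since $\|K^n x\|^{1/n}\leq \|K^n\|^{1/n}\|x\|^{1/n}$ and $\|x\|^{1/n}\to 1$).

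When $[K\rangle$ is ideal irreducible, I would invoke Theorem~\ref{drnl} directly with any fixed $x>0$: it produces a common non-trivial invariant closed ideal for $[K\rangle$, contradicting the hypothesis. This case is immediate from the cited theorem.

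When $\langle K]$ is ideal irreducible, Theorem~\ref{drnl} as stated does not literally apply, so a small additional step is needed. I would appeal to the underlying result of \cite{drn2001} (Theorem~4.3 there), which asserts that \emph{any} multiplicative semigroup of positive operators containing a non-zero compact $K$ with $\lim_n\|K^n x\|^{1/n}=0$ for some $x>0$ has a common non-trivial invariant closed ideal. Since $\langle K]$ is also a multiplicative semigroup containing $K$ (as noted just before the lemma), the same argument yields a non-trivial closed ideal invariant under all of $\langle K]$, again contradicting ideal irreducibility.

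The hard part will be the $\langle K]$ case. The naive ``quasi-nilpotent'' ideal $\{y:\|K^n y\|^{1/n}\to 0\}$ is easily seen to be invariant under $[K\rangle$: $S\in[K\rangle$ implies $K^n S\leq SK^n$, hence $\|K^n Sy\|\leq\|S\|\,\|K^n y\|$ for $y\geq 0$. The analogous estimate fails for $\langle K]$, since $S\in\langle K]$ only gives $SK^n\leq K^n S$, which is a \emph{lower} bound on $\|K^n Sy\|$. One must therefore either invoke the general semigroup theorem cited above, or else pass to $X^*$ (using that $S\in\langle K]$ iff $S^*\in[K^*\rangle$), apply Theorem~\ref{drnl} to $K^*$ on $X^*$, and then descend the resulting invariant ideal to a closed lattice ideal in $X$ via a careful annihilator construction—this second route being considerably more delicate, as one must verify that the descended object is both non-trivial and a genuine lattice ideal.
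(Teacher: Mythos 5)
Your handling of the $[K\rangle$ case is correct and is in fact a valid shortcut: $r(K)=0$ gives $\lim_n\|K^nx\|^{1/n}=0$ for every $x>0$, so Theorem~\ref{drnl} applies directly and yields a contradiction.

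The $\langle K]$ case, however, contains a genuine gap. The ``underlying result'' you attribute to Theorem~4.3 of \cite{drn2001} --- that \emph{any} multiplicative semigroup of positive operators containing a non-zero compact $K$ with $\lim_n\|K^nx\|^{1/n}=0$ at some $x>0$ must have a common invariant closed ideal --- is false as stated. The semigroup $L_+(X)$ of \emph{all} positive operators is ideal irreducible (indeed, positive rank-one operators alone already are), and on, say, $\ell_2$ it contains compact positive operators that are locally quasi-nilpotent at a positive vector (e.g.\ a weighted backward shift with weights tending to zero, applied at $e_1$). Thus the hypothesis of the real Theorem~4.3 must include a structural constraint tying the semigroup to $K$ (roughly, that it sit inside $[K\rangle$), which is precisely what breaks down when you try to apply it to $\langle K]$. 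Your fallback route through $X^*$ is also left open: as you yourself note, the pre-annihilator of a merely norm-closed ideal $J\subseteq X^*$ need not be an ideal of $X$, and worse, it can be $\{0\}$ whenever $J$ is weak$^*$-dense (for instance $c_0\subsetneq\ell_\infty=(\ell_1)^*$). Neither worry is addressed.

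The paper takes a different and more symmetric route. Rather than quoting Theorem~\ref{drnl}, it forms the \emph{semigroup ideal} $\mathcal{K}$ generated by $K$ inside $[K\rangle$ (respectively $\langle K]$), whose elements all have the form $S_1KS_2$. For $S_i\in[K\rangle$ one has $S_1KS_2\le S_1S_2K$, so $r(S_1KS_2)\le r(S_1S_2K)\le r(K)\,r(S_1S_2)=0$; for $S_i\in\langle K]$ one has $S_1KS_2\le KS_1S_2$ and the same estimate. Hence $\mathcal{K}$ is a semigroup of quasi-nilpotent compact operators, which is ideal reducible by Drnov\v{s}ek's theorem (Corollary~10.47 of \cite{abr}), and ideal reducibility then lifts from the semigroup ideal $\mathcal{K}$ to the whole semigroup by Proposition~2.1 of \cite{drn1}. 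This handles both $[K\rangle$ and $\langle K]$ with the same three lines, avoiding both the false generalization and the duality pitfalls. To repair your argument you should either adopt this semigroup-ideal device or give a genuine proof of the descent from $X^*$ to $X$; as it stands, only the $[K\rangle$ half is complete.
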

\begin{proof}Suppose first that $[K\rangle$ is ideal irreducible and $r(K)=0$.
Let $\mathcal{K}$ be the ideal generated by $K$ in the semigroup $[K\rangle$. Then each member in $\mathcal{K}$ has the form $S_1KS_2$ for some $S_i\in [K\rangle$.
Clearly, $S_1KS_2\leq S_1S_2K$. Since $S_1S_2\in[K\rangle$, we have $r(S_1KS_2)\leq r(S_1S_2K)\leq  r(K)r(S_1S_2)=0$ by the remark preceding this lemma.
Therefore, $\mathcal{K}$ consists of quasi-nilpotent compact operators. Hence, $\mathcal{K}$ is ideal reducible by Drnov\v{s}ek's Theorem (see Corollary~10.47, \cite{abr}), 
and thus so is $[K\rangle$, by Proposition~2.1 of \cite{drn1}. This contradicts our assumption. Similar arguments work for the other case.
\end{proof}

\section{Main Results}\label{sec3}
For a compact operator $K>0$ such that either $[K\rangle$ or $\langle K]$ is ideal irreducible, we have $r(K)>0$ by Lemma~\ref{turo}. We will usually scale it so that $r(K)=1$.

\begin{theorem}\label{hyper}Suppose $K>0$ is compact, and either $[K\rangle$ or $\langle K]$ is ideal irreducible. Assume $r(K)=1$. Let $P$ be the spectral projection for $\sigma_{per}(K)$.
\begin{enumerate}
\item\label{hyperi1} There exist disjoint positive vectors $\{x_i\}_1^r$ and disjoint positive functionals $\{x_i^*\}_1^r$, where $r=\mathrm{rank}(P)$, such that
$$P=\sum_1^rx_i^*\otimes x_i;\quad x_i^*(x_j)=\delta_{ij},\;\forall\,i,j.$$
\item\label{hyperi2} $K|_{PX}$ is a
 permutation on $\{x_i\}_1^r$ and $K^*|_{P^*X^*}$ is a permutation on $\{x_i^*\}_1^r$. In particular, there exists $m\geq 1$ such that $P=\lim_n K^{nm}$.
\item\label{hyperi3} There exist a quasi-interior point $x_0>0$ and a strictly positive functional $x_0^*>0$ such that $$Kx_0= x_0\mbox{\;and\;}K^*x_0^*=x_0^*.$$
\item\label{hyperi4} For any $x>0$ and $x^*>0$,
$$0<\inf_n ||K^nx||\leq \sup_n||K^nx||<\infty,$$
$$0<\inf_n ||K^{n*}x^*||\leq \sup_n||K^{n*}x^*||<\infty.$$
In particular, $\lim_n||K^nx||^{\frac{1}{n}}=1$ for all $x>0$ and $\lim_n||K^{n*}x^*||^{\frac{1}{n}}=1$ for all $x^*>0$.
\item\label{hyperi5}Every operator semi-commuting with $K$ commutes with $K$. In particular,
$$[K\rangle=\langle K]=L_+(X)\cap \{K\}'.$$
\item\label{hyperi6}For any $0<S\leftrightarrow K$, there exist $\lambda_S\geq 0$, $x> 0$ and $x^*>0$ such that 
$$Sx=\lambda_S x,\quad S^*x^*=\lambda_S x^*.$$ 
    \end{enumerate}
\end{theorem}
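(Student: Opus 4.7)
The strategy is to reduce to a norm limit $K^{n_j}\to P$ via Lemma~\ref{pRiesz}, establish the quasi-interior fixed vectors in (iii) using the ideal irreducibility hypothesis, then apply Lemma~\ref{scc} to obtain (v), and finally read off (i), (ii), (iv), (vi) from the resulting structure. Applying Lemma~\ref{pRiesz}, I obtain either $K^{n_j}\to P$ in norm, where $P\geq 0$ is the finite-rank spectral projection for $\sigma_{per}(K)$, or $c_jK^{n_j}\to N$ for some non-zero finite-rank nilpotent $N$ with $c_j\downarrow 0$. The second alternative is ruled out under either irreducibility hypothesis: $N$ is positive, compact, and commutes with $K$ (as a limit of commuting operators), and the induction $SK\geq KS\Rightarrow SK^n\geq K^nS$ passes to the limit to give $SN\geq NS$ for every $S\in[K\rangle$, whence $[K\rangle\subseteq[N\rangle$. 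Running the argument of Lemma~\ref{turo} with $N$ in place of $K$ then shows the ideal generated by $N$ in $[K\rangle$ consists of quasi-nilpotent compact operators, contradicting ideal irreducibility via Drnov\v{s}ek and Proposition~2.1 of \cite{drn1}; the $\langle K]$ case is symmetric. Schaefer's cyclicity theorem gives $\sigma_{per}(K)=\{e^{2\pi ik/m}:0\le k<m\}$ for some $m$, and boundedness of $\|K^{n_j}|_{PX}\|$ rules out Jordan blocks on $PX$, so $K^m|_{PX}=I$; combined with $\|K^n|_{(I-P)X}\|\to 0$, this yields $K^{nm}\to P$ in operator norm.

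For (iii), under $\langle K]$-irreducibility I would show the closed ideal $J:=\overline{I_{\mathcal{F}}}$ generated by $\mathcal{F}:=\ker(K-I)\cap X_+$ is $\langle K]$-invariant: for $S\in\langle K]$ and $y\in\mathcal{F}$, the relation $KSy\geq SKy=Sy$ makes $\{K^nSy\}$ order-increasing, norm-bounded (from $\sup_n\|K^n\|<\infty$, a consequence of the block decomposition on $PX\oplus(I-P)X$), and precompact by compactness of $K$, yielding a subsequential norm-limit $v\in\mathcal{F}$ with $v\geq Sy$, so $Sy\in I_v\subseteq J$. Ideal irreducibility forces $J=X$; the finite-dimensional cone $\mathcal{F}$ in $V_0:=\ker(K-I)$ has non-empty relative interior, and any interior point $x_0\in\mathcal{F}$ satisfies $\mathcal{F}\subseteq I_{x_0}$, whence $\overline{I_{x_0}}=X$ and $x_0$ is quasi-interior. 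The parallel increasing-orbit argument for $S^*\in\langle K^*]$ (corresponding to $S\in[K\rangle$ via adjoint) on the fixed space $V_0^*:=\ker(K^*-I)$ delivers, under $[K\rangle$-irreducibility, a strictly positive $x_0^*\in V_0^*\cap X^*_+$; symmetry via duality handles both $x_0$ and $x_0^*$ under each hypothesis. Lemma~\ref{scc} now applies to $K$ and any operator semi-commuting with $K$, collapsing semi-commutation to commutation, and yielding $[K\rangle=\langle K]=L_+(X)\cap\{K\}'$, which is (v).

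For (i), (ii), the cyclic structure follows from the Perron-Frobenius-Schaefer decomposition (cf.~\cite[V.5.4]{scha3}), now applied to the ideal-irreducible commutative semigroup $L_+(X)\cap\{K\}'$ on the finite-dimensional peripheral subspace: $PX$ admits a basis $\{x_i\}_1^r$ of mutually disjoint positive vectors cyclically permuted by $K$, and dually $P^*X^*$ admits a biorthogonal disjoint positive basis $\{x_i^*\}_1^r$ permuted by $K^*$. For (iv), $K^*x_0^*=x_0^*$ and strict positivity give $\|K^nx\|\geq x_0^*(x)/\|x_0^*\|>0$ for $x>0$ uniformly in $n$, with the upper bound from the block decomposition, and Lemma~\ref{locaq} converts these into $\lim\|K^nx\|^{1/n}=1$; the dual statement is identical. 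For (vi), any $0<S\leftrightarrow K$ preserves the finite-dimensional $\ell^k_\infty$-lattices $V_0$ and $V_0^*$ via the bases from (i); the biorthogonality realizes $S^*|_{V_0^*}$ as the transpose of $S|_{V_0}$, and finite-dimensional Perron-Frobenius then yields a common eigenvalue $\lambda_S=r(S|_{V_0})$ with positive eigenvectors in both spaces. The main obstacle I anticipate is the proof of (iii): while the $\langle K]$-case for $x_0$ flows smoothly via the increasing-orbit/compactness argument, obtaining strict positivity of $x_0^*$ under $\langle K]$-irreducibility (and symmetrically quasi-interiority of $x_0$ under $[K\rangle$) requires a careful dual passage to $X^*$ through the correspondence between closed ideals of $X$ and weak*-closed $l$-ideals of $X^*$.
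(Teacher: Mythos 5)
Your overall skeleton matches the paper's: apply Lemma~\ref{pRiesz} to get $K^{n_j}\to P$ after ruling out the nilpotent alternative via Lemma~\ref{turo}, build $x_0$ and $x_0^*$ from the ideal-irreducibility hypothesis, and then let Lemma~\ref{scc} deliver \eqref{hyperi5}. But there are two genuine problems.

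\textbf{First, the derivation of $K^m|_{PX}=I$ is wrong.} You deduce $\sigma_{per}(K)=\{e^{2\pi ik/m}:0\le k<m\}$ from ``Schaefer's cyclicity theorem,'' but that theorem is for an ideal irreducible operator, and $K$ itself need not be ideal irreducible here --- only $[K\rangle$ or $\langle K]$ is. Example~\ref{remi2} in the paper is exactly designed to show $K|_{PX}$ can be an arbitrary permutation, so $\sigma_{per}(K)$ may be a union of several cyclic groups of roots of unity rather than a single cyclic group. Moreover, ``boundedness of $\|K^{n_j}|_{PX}\|$ rules out Jordan blocks'' only gives diagonalizability with unit-modulus eigenvalues, which does \emph{not} imply finite order (think of an irrational rotation, where a subsequence of powers tends to the identity). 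The paper's route, which you need to substitute in, is the positivity of the inverse: $I|_{PX}=\lim_j(K|_{PX})^{n_j}$, so $(K|_{PX})^{-1}=\lim_j(K|_{PX})^{n_j-1}\geq 0$, hence $K|_{PX}$ is a weighted permutation on a disjoint positive basis; unit-modulus eigenvalues then force the cycle products of the weights to equal $1$, and a rescaling of the basis makes $K|_{PX}$ an honest permutation. This single fact also underlies $\sup_n\|K^n\|<\infty$, which your increasing-orbit argument in \eqref{hyperi3} depends on.

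\textbf{Second, the gap you flag yourself --- strict positivity of $x_0^*$ (and dually, quasi-interiority of $x_0$) --- is not cosmetic.} The paper resolves it with two \emph{different} arguments, neither of which is the straightforward duality you anticipate. Under $[K\rangle$-irreducibility: the closed ideal $N_P=\{x:P|x|=0\}$ is $[P\rangle$-invariant, hence $[K\rangle$-invariant, so $N_P=\{0\}$ and $P$ is strictly positive, giving $x_0^*$ strictly positive directly; then one shows $Tx_0\in\ker(1-K)\subset PX\subset\overline{I_{x_0}}$ for every $T\in[K\rangle$, so $\overline{I_{x_0}}$ is $[K\rangle$-invariant and equals $X$. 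Under $\langle K]$-irreducibility: the ideal $I_{PX}=I_{x_0}$ is $\langle K]$-invariant, so $x_0$ is quasi-interior; then for $T\in\langle K]$ one gets $T^*x_0^*\in\ker(1-K^*)\subset\mathrm{Span}\{x_i^*\}$, and if $x_0^*$ killed some $x>0$, then $x_0^*(Tx)=0$ for all $T\in\langle K]$, contradicting ideal irreducibility via Proposition~2.1 of \cite{drn1}. Your increasing-orbit argument is a workable alternative for the first half of the $\langle K]$ case, but the complementary halves require these asymmetric arguments, not a formal dual passage.

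The claims in \eqref{hyperi4} and \eqref{hyperi6} are argued essentially as in the paper, and the argument that $\mathcal{F}=\ker(K-I)\cap X_+$ generates $\ker(K-I)$ is repairable once you know $K|_{PX}$ is a lattice isomorphism, but as written it rests on the same unjustified premise as the first error.
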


\begin{proof}Suppose first that $[K\rangle$ is ideal irreducible.
We apply Lemma~\ref{pRiesz} to $K$. We claim that the nilpotent case is impossible. Indeed, otherwise, by Lemma~\ref{pRiesz}, $c_jK^{n_j}$ converges to a non-zero finite-rank nilpotent operator $N$ for some 
$n_j\uparrow\infty$ and positive reals $c_j\downarrow 0$. 
Clearly, $N$ is positive and compact. Thus Lemma~\ref{turo} implies that $[N\rangle$ is ideal reducible. It is easy to verify that $[K\rangle\subset [N\rangle$. Hence $[K\rangle$ is also ideal reducible, a contradiction.
Therefore, using Lemma~\ref{pRiesz} again, we have $P=\lim_jK^{n_j}$ for some $n_j\uparrow\infty$. In particular, $P>0$.
Therefore, the range $PX$ is a lattice subspace of $X$ with lattice operations $x\overset{*}\wedge y=P(x\wedge y)$ and $x\overset{*}\vee
y=P(x\vee y)$ for any $x,y\in PX$; see Proposition~11.5 on p.~214 of \cite{scha3}. Being a finite-dimensional Archimedean vector lattice, it is lattice
isomorphic to $\mathbb R^r$ with the standard order; see Corollary~1 on p.~70 of \cite{scha3}. Therefore, we can take positive *-disjoint vectors $x_i\in PX$
($i=1,\dots,r$) and positive $y_i^*\in(PX)^*$ such that $y_i^*(x_j)=\delta_{ij}$. Put $x_i^*=y_i^*\circ P$. Then $x_i^*\in X^*_+$, $P=\sum_1^rx_i^*\otimes x_i$ and $x_i^*(x_j)=\delta_{ij}$.\par

Being a spectral subspace, $PX$ is invariant under $K$. Note that $K|_{PX}$ is positive on the lattice subspace $PX$. It follows from $P=\lim_jK^{n_j}>0$ that $I|_{PX}=\lim_j(K|_{PX})^{n_j}$. Thus $(K|_{PX})^{-1}=\lim_j (K|_{PX})^{n_j-1}$ is also positive on $PX$. It is well known that a
positive operator on $\mathbb{R}^r$ has a positive inverse if and only if it is a weighted permutation on the standard basis with positive weights, if and only if it is a direct sum of weighted cyclic permutations
with positive weights. 
Since $\sigma(K|_{PX})=\sigma_{per}(K)\subset \{z\in\mathbb{C}:|z|=1\}$, it is easily seen that after appropriately scaling the basis vectors $x_i$'s, $K|_{PX}$ is a permutation on $x_i$'s. We accordingly scale $x_i^*$'s so that we still have $x_i^*(x_j)=\delta_{ij}$ and $P=\sum_1^rx_i^*\otimes x_j$.
Note that $Kx_j=KPx_j=PKx_j=\sum_{i=1}^kx_i^*(Kx_j)x_i$ and that $K^*x_j^*=K^*P^*x_j^*=P^*K^*x_j^*=\sum_{i=1}^kx_j^*(Kx_i)x_i^*$. Hence the matrix of $K^*|_{P^*X^*}$ relative to $\{x_i^*\}$ 
is the transpose of the matrix of $K|_{PX}$ relative to $\{x_i\}$, and thus $K^*|_{P^*X^*}$ is a permutation on $x_i^*$'s. Put $x_0=\sum_1^rx_i$ and $x_0^*=\sum_1^rx_i^*$. Then it is clear that $Kx_0=x_0$ and $K^*x_0^*=x_0^*$.\par

Since $K|_{PX}$ is a permutation on $x_i$'s, we can take $m\geq 1$ such that $(K|_{PX})^{m}=I|_{PX}$. Denote by $Q$ the spectral projection of $K$ for 
$\sigma(K)\backslash\sigma_{per}(K)$. Then $r(K|_{QX})<1$. Thus $(K|_{QX})^{n}\rightarrow 0$ as $n\rightarrow\infty$. It follows that $K^{mn}=(K|_{PX})^{mn}\oplus (K|_{QX})^{mn}\rightarrow I|_{PX}\oplus 0=P$ as $n\rightarrow\infty$.

Since $P=\sum_1^rx_i^*\otimes x_i$, it is easy to see that $P$ is strictly positive if and only if $x^*_0$ is strictly positive, and that $P^*$ is strictly positive if and only if $x_0$ is quasi-interior.
We now prove that both $P$ and $P^*$ are strictly positive. It is easy to verify that $N_P:=\{x\in X:P|x|=0\} $ is a closed ideal invariant under $[P\rangle$. Since $[K\rangle \subset [P\rangle$, we know $N_P$ is also invariant under $[K\rangle$. From this it follows easily that $N_P=\{0\} $. Thus $P$ is strictly positive, and so is $x_0^*$. 
Now for any $T\in[K\rangle$, we have $x_0^*((TK-KT)x_0)=x_0^*(TKx_0)-(K^*x_0^*)(Tx_0)=0$. By strict positivity of $x_0^*$, we have $KTx_0=TKx_0=Tx_0$. 
Thus $Tx_0\in\ker(1-K)\subset PX\subset \overline{I_{x_0}}$. This implies that $\overline{I_{x_0}}$ is invariant under $[K\rangle$, hence $\overline{I_{x_0}}=X$. It follows that $x_0$ is quasi-interior and thus $P^*$ is strictly positive.\par

Since $P$ is strictly positive, it follows from $0=x_i\overset{*}\wedge x_j=P(x_i\wedge x_j)$ that $x_i\perp x_j$ whenever $i\neq j$.
We now prove that $x_i^*$'s are disjoint.
Indeed, by Riesz-Kantorovich formulas, for $i\neq j$,
$$0\leq (x_i^*\wedge x_j^*)(x_0)=\inf_{0\leq u\leq x_0}\{x_i^*(u)+x_j^*(x_0-u)\}\leq x_i^*(x_j)+x_j^*(x_0-x_j)=0.$$
Thus $(x_i^*\wedge x_j^*)(x_0)=0$, yielding that $x_i^*\wedge x_j^*=0$, since $x_0$ is quasi-interior. This proves \eqref{hyperi1}, \eqref{hyperi2} and \eqref{hyperi3}. \eqref{hyperi5} follows from \eqref{hyperi3} and Lemma~\ref{scc}.\par

For \eqref{hyperi4}, fix any $x>0$. Since $P$ is strictly positive, we have $Px>0$. Now $K|_{PX}$ being a permutation implies that
$$0<\liminf_{n}||(K|_{PX})^nPx||\leq \limsup_n||(K|_{PX})^nPx||<\infty.$$
Recall that $Q$ is the spectral projection of $K$ for 
$\sigma(K)\backslash\sigma_{per}(K)$ and $(K|_{QX})^{n}\rightarrow 0$. It follows from $K^nx=(K|_{PX})^nPx+(K|_{QX})^nQx$ that
$$0<\liminf_{n}||K^nx||\leq \limsup_n||K^nx||<\infty.$$
Hence, $K$ is strictly positive. This in turn implies 
$$0<\inf_{n}||K^nx||\leq \sup_n||K^nx||<\infty.$$
Taking the $n$-th root, we have $\lim_n||K^nx||^{\frac{1}{n}}=1$ for all $x>0$.
For the dual case, a similar argument works.\par

For \eqref{hyperi6}, pick any $0<S\in\{K\}'$. Then $SP=PS$. Note that the matrix of $S|_{PX}$ relative to $\{x_i\}$ is $(x_i^*(Sx_j))_{i,j}$, 
and that the matrix of $S^*|_{P^*X^*}$ relative to $\{x_i^*\}$ is $(x_i^*(Sx_j))_{j,i}$, which is the transpose of $(x_i^*(Sx_j))_{i,j}$. Since both matrices are positive, 
they have positive eigenvectors for the spectral radius. It follows that there exist $0< x\in PX$ and $0<x^*\in P^*X^*$ such that $Sx=r(S|_{PX})x$ and $S^*x^*=r(S|_{PX})x^*$.\par
\smallskip
Now assume that $\langle K]$ is ideal irreducible. We shall apply similar arguments. In fact, we only need to modify the proof of strict positivity of $P$ and $P^*$.
It is easy to verify that the ideal ${I_{PX}}\neq \{0\}$ is invariant under $\langle P]$ and thus is invariant under $\langle K]$. Therefore, $\overline{I_{PX}}=X$.
On the other hand, we clearly have ${I_{PX}}={I_{x_0}}$. Hence $\overline{I_{x_0}}=X$. It follows that $x_0$ is quasi-interior and thus $P^*$ is stricly positive.
Now for any $T\in\langle K]$, $\langle(K^*T^*-T^*K^*)x_0^*,x_0\rangle=x_0^*(TKx_0)-x_0^*(KTx_0)=0$. Hence $K^*T^*x_0^*=T^*K^*x_0^*=T^*x_0^*$. This implies $T^*x_0^*\in \ker(1-K^*)\subset P^*X^*=\mathrm{Span}\{x_i^*\}_1^r$.
We claim that $x_0^*$ is strictly positive. Suppose, otherwise, $x_0^*(x)=0$ for some $x>0$. Then $x_i^*(x)=0$ for $1\leq i\leq r$. It follows that $x_0^*(Tx)=T^*x_0^*(x)=0$ for any $T\in \langle K]$.
By Proposition~2.1 of \cite{drn1}, $\langle K]$ is ideal reducible, a contradiction.
It follows that $x^*_0$ and $P$ are both strictly positive. 
\end{proof}

\begin{remark}\label{rem}We apply Theorem~\ref{hyper} to the operator $K$ in Theorem~\ref{dsns}. By Lemma~\ref{turo}, $r(K)>0$. We scale $K$ so that $r(K)=1$. Then $K|_{PX}$ is a permutation on $x_i$'s. We claim that it is a cyclic permutation.  Suppose, otherwise, $K|_{PX}$ has a cycle of length $m<r$. Without loss of generality, assume that $K|_{PX}$ has a cyclic permutation on $x_1,\dots,x_m$. Then the closed ideal $\overline{I_{\{x_1,\dots,x_m\}}}$ is non-zero and invariant under $K$. Since $\overline{I_{\{x_1,\dots,x_m}\}}$ is disjoint from $x_r$, it is proper, contradicting ideal irreducibility of $K$. This proves the claim. Theorem~\ref{dsns} now follows immediately.
\end{remark}

\begin{example}\label{remi2}
Consider $T=\left[
    \begin{smallmatrix}
      1 & 1 & 1 & 1 \\
       1 &1  & 1 &1  \\
      1 & 1 & 1 &1  \\
     1 & 1 & 1 & 1 \\
    \end{smallmatrix}
  \right]$ and $ K=\left[
    \begin{smallmatrix}
      0 & 1 &  &  \\
     1 & 0 &  &  \\
     &  & 1 & 0 \\
    &  & 0 & 1 \\
    \end{smallmatrix}
  \right]$.
Then the peripheral spectral projection of $K$ is the identity,  $T\leftrightarrow K$, and $T$ is irreducible (in particular, $[K\rangle$ and $\langle K]$ are both irreducible). Modifying this example, we can easily see that, for the operator $K$ in Theorem~\ref{hyper}, $K|_{PX}$ could be an arbitrary permutation.
\end{example}
\begin{example}\label{remi3} We can not expect $\lambda>0$ in Theorem~\ref{hyper}\eqref{hyperi6}. Let $K$ be as in Example~\ref{remi2}. Put
$S=\left[
    \begin{smallmatrix}    
      0& 0 &  &  \\
      0 &0  &  &  \\
       &  & 0 & 1 \\
      &  &  0& 0 \\
    \end{smallmatrix}
  \right]
$.
Then $S\leftrightarrow K$, and $S$ is nilpotent.
\end{example}

Theorem~\ref{hyper} immediately yields the following corollary.

\begin{corollary}\label{pcu1}Suppose $T$ and $K$ are two non-zero positive semi-commuting operators such that $T$ is ideal irreducible and $K$ is compact. Then $TK=KT$. 
\end{corollary}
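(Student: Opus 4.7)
The plan is to reduce the corollary directly to Theorem~\ref{hyper}\eqref{hyperi5}. First, I will use the semi-commutation hypothesis to locate $T$ in one of the super-commutants of $K$: if $TK\geq KT$, then $T\in[K\rangle$, and if $TK\leq KT$, then $T\in\langle K]$. So in at least one of the two cases, $T$ belongs to the relevant super-commutant of $K$.

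Next, I will promote ideal irreducibility of $T$ to ideal irreducibility of that super-commutant. Suppose $T\in[K\rangle$ and let $J$ be a closed ideal invariant under every member of $[K\rangle$. Then $J$ is, in particular, invariant under $T$, and so by ideal irreducibility of $T$ we must have $J=\{0\}$ or $J=X$. Thus $[K\rangle$ is ideal irreducible. The case $T\in\langle K]$ is handled by the same argument applied to $\langle K]$.

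Since $K>0$ is compact and one of $[K\rangle$ or $\langle K]$ is ideal irreducible, Theorem~\ref{hyper}\eqref{hyperi5} applies to give $[K\rangle=\langle K]=L_+(X)\cap\{K\}'$. In particular, $T\in\{K\}'$, i.e.~$TK=KT$. I do not anticipate a genuine obstacle: the content of the corollary is essentially a restatement of \eqref{hyperi5}, once one observes that the semi-commutation hypothesis automatically places $T$ inside a super-commutant whose ideal irreducibility is then inherited from that of $T$.
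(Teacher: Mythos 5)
Your proof is correct and is exactly the paper's intended (first) derivation: the paper remarks that Corollary~\ref{pcu1} follows ``immediately'' from Theorem~\ref{hyper}, and you supply the two short observations implicit in that claim, namely that semi-commutation places $T$ inside $[K\rangle$ or $\langle K]$, and that ideal irreducibility of a single member forces ideal irreducibility of the whole collection, so that Theorem~\ref{hyper}\eqref{hyperi5} applies. (The paper also records a second, genuinely different proof via the comparison theorem, Lemma~\ref{compa}, but your route matches the primary one.)
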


We will provide an alternative proof of this corollary which is of independent interest.
Recall that a positive operator $T $ is 
called \textbf{strongly expanding} if it sends non-zero positive vectors to quasi-interior points. It is known that if $T$ is ideal irreducible 
then $\sum_1^\infty\frac{T^n}{\lambda^n}$ is strongly expanding for all $\lambda>r(T)$ (see Corollary~9.14, \cite{abr}). We also need the following \emph{comparison theorem}.

\begin{lemma}[\cite{gao}]\label{compa}
Suppose $A\geq B\geq 0$, $r(A)=r(B)$ and $A$ is ideal irreducible. If $A$ and $B$ are both compact then $A=B$.
\end{lemma}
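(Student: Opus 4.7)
The plan is to combine the Perron--Frobenius structure of $A$ provided by Theorem~\ref{dsns} with a Krein--Rutman argument applied to $B$, in order to force the one-sided inequality $B\leq A$ to become an equality on every positive vector; the key reduction will be to obtain $(A-B)^*x_0^*=0$, after which strict positivity of $x_0^*$ will immediately give $A=B$.

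First, I would invoke Theorem~\ref{dsns}\eqref{dsnsi1} on the ideal irreducible compact operator $A$ to obtain $r(A)>0$, a quasi-interior point $x_0>0$ with $Ax_0=r(A)x_0$, and a strictly positive functional $x_0^*>0$ with $A^*x_0^*=r(A)x_0^*$. Since $B$ is compact and positive with $r(B)=r(A)>0$, the classical Krein--Rutman theorem yields an eigenvector $x>0$ satisfying $Bx=r(A)x$. Then $Ax-r(A)x=Ax-Bx\geq 0$, and pairing with $x_0^*$ gives
\[
\langle Ax-r(A)x,\,x_0^*\rangle=\langle x,\,A^*x_0^*-r(A)x_0^*\rangle=0;
\]
strict positivity of $x_0^*$ forces $Ax=r(A)x$. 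Since Theorem~\ref{dsns}\eqref{dsnsi1} gives $\ker(r(A)-A)=\mathrm{Span}\{x_0\}$, the vector $x$ is a positive scalar multiple of $x_0$, and in particular $Bx_0=r(A)x_0$.

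I would then transfer this to the dual side. From $B\leq A$ we have $B^*x_0^*\leq A^*x_0^*=r(A)x_0^*$, while on the other hand
\[
\langle x_0,\,r(A)x_0^*-B^*x_0^*\rangle=r(A)\langle x_0,x_0^*\rangle-\langle Bx_0,x_0^*\rangle=0.
\]
Because $r(A)x_0^*-B^*x_0^*\geq 0$ and $x_0$ is quasi-interior (so $\widehat{x_0}$ is a strictly positive functional on $X^*$), this forces $B^*x_0^*=r(A)x_0^*=A^*x_0^*$, i.e.\ $(A-B)^*x_0^*=0$. Now for every $z\geq 0$ we have $(A-B)z\geq 0$ and
\[
\langle (A-B)z,\,x_0^*\rangle=\langle z,\,(A-B)^*x_0^*\rangle=0,
\]
so strict positivity of $x_0^*$ yields $(A-B)z=0$, and hence $A=B$. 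I do not foresee a real obstruction; the only non-trivial external input is the Krein--Rutman existence of a positive eigenvector of the compact positive operator $B$ at $r(B)>0$, which is standard in this setting.
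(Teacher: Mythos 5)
The paper does not give a proof of Lemma~\ref{compa}; it is quoted from \cite{gao}, so there is no in-text argument to compare against. Evaluated on its own, your proof is correct and well organized. Each step checks out: Theorem~\ref{dsns}\eqref{dsnsi1} supplies $Ax_0=r(A)x_0$ with $x_0$ quasi-interior, $A^*x_0^*=r(A)x_0^*$ with $x_0^*$ strictly positive, and $\ker(r(A)-A)=\mathrm{Span}\{x_0\}$; Krein--Rutman (valid since $B$ is compact, positive, and $r(B)=r(A)>0$) gives a positive eigenvector $x$ of $B$ at $r(A)$; the pairing $\langle Ax-Bx,x_0^*\rangle=0$ together with $Ax-Bx\geq0$ and strict positivity of $x_0^*$ forces $Ax=r(A)x$, hence $x\in\mathrm{Span}\{x_0\}$ and $Bx_0=r(A)x_0$; then $r(A)x_0^*-B^*x_0^*\geq0$ pairs to zero against the quasi-interior point $x_0$, so $(A-B)^*x_0^*=0$; finally $(A-B)z\geq0$ and $\langle(A-B)z,x_0^*\rangle=0$ for all $z\geq0$ yields $A=B$. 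This is a clean and natural route using precisely the machinery the paper already relies on (Theorem~\ref{dsns} and Krein--Rutman, both invoked elsewhere in the text), and I see no gap.
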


\begin{proof}[A second proof of Corollary~\ref{pcu1}] Without loss of generality, assume $||T||<1$. Then $\widetilde{T}=\sum_1^\infty T^n$ is strongly expanding. We claim that $K\widetilde{T}$ and $\widetilde{T}K$ do not have a 
common non-trivial invariant closed ideal. Otherwise, let $J$ be such an ideal. If there exists $0<x\in J$ such that $Kx>0$, then $\widetilde{T}(Kx)$ is a quasi-interior point. But 
$\widetilde{T}Kx\in \widetilde{T}K(J)\subset J$ implies $J=X$, a contradiction. Hence, $J\subset\ker K$. Now pick any $0<x\in J$. Then $\widetilde{T}x>0$ is a quasi-interior point. 
From $K(K\widetilde{T}(x))\in K(K\widetilde{T}(J))\subset K(J)=\{0\}$, it follows that $K^2=0$. In particular, $r(K)=0$, contradicting ideal irreducibility of $T$ by Lemma~\ref{turo}. This proves the claim.\par
Now assume $TK\geq KT$. Then $T^nK\geq KT^n$ for all $n\geq 1$. Thus $\widetilde{T}K\geq K\widetilde{T}\geq0$. If $\widetilde{T}K$ has a non-trivial invariant closed ideal then it is also invatiant under $K\widetilde{T}$, contradicting the preceding claim. Thus $\widetilde{T}K$ is ideal irreducible.
Note now that $\widetilde{T}K$ and $K\widetilde{T}$ are both compact, and that $r(\widetilde{T}K)=r(K\widetilde{T})$. Thus $\widetilde{T}K=K\widetilde{T}$ by Lemma~\ref{compa}. 
This immediately implies that $TK=KT$. For $TK\leq KT$, we have $K\widetilde{T}\geq \widetilde{T}K$. The same argument yields $K\widetilde{T}=\widetilde{T}K$ and $TK=KT$.\end{proof}

We refer to \cite{gao} for more comparison theorems. We look at the operator $T$ in Corollary~\ref{pcu1} more closely.

\begin{proposition}\label{peris}Let $T>0$ be ideal irreducible and $K>0$ be compact. Suppose that $T\leftrightarrow K$.
\begin{enumerate}
 \item\label{perisi1}There exist $0<\lambda\leq r(T)$, a quasi-interior point $x_0>0$ and a strictly positive functional $x_0^*>0$ such that
$$Tx_0=\lambda x_0,\;T^*x_0^*=\lambda x_0^*;\; \;Kx_0=r(K) x_0, \;K^*x_0^*=r(K)x_0^*.$$
Moreover, $\lambda=r(T)$ if $r(T) $ is an eigenvalue of either $T$ or $T^*$.
\item\label{perisi2} $\liminf_n||T^nx||^{\frac{1}{n}}\geq \lambda$ and $\liminf_n||T^{n*}x^*||^{\frac{1}{n}}\geq \lambda$ for any $x>0$ and $x^*>0$.
\item\label{perisi3}Every operator semi-commuting with $T$ commutes with $T$. In particular,
$$[T\rangle=\langle T]=L_+(X)\cap \{T\}'.$$
\item\label{perisi4} For any $S\in \{T\}'$, $Sx_0=\lambda_Sx_0$ for some $\lambda_S\in\mathbb{R}$. If, in addition, $S>0$, then $r(S)>0$.
\end{enumerate}
\end{proposition}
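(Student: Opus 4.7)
The plan is to bootstrap from Theorem~\ref{hyper} applied to $K$. Since $T$ commutes with $K$, the positive operator $T$ lies in both $[K\rangle$ and $\langle K]$, and ideal irreducibility of $T$ alone makes both of these super-commutants ideal irreducible. Thus Theorem~\ref{hyper} applies to $K$: scaling so that $r(K)=1$, we obtain the spectral projection $P$, its finite-dimensional range $PX$ with disjoint lattice basis $\{x_i\}_1^r$ (and biorthogonal dual $\{x_i^*\}_1^r$), and the cycle structure $C_1,\ldots,C_k$ of the permutation $K|_{PX}$.

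For part (i), I construct common $T,K$-eigenvectors by Perron--Frobenius on $T|_{\ker(1-K)}$. The fixed-point space $\ker(1-K)=\mathrm{Span}\{e_j\}_1^k$ with $e_j=\sum_{i\in C_j}x_i$ is $T$-invariant since $TK=KT$, and in this basis $T|_{\ker(1-K)}$ has a non-negative matrix. Ideal irreducibility of $T$ forces this matrix to be Perron-irreducible: any proper invariant subset $A\subsetneq\{1,\ldots,k\}$ of cycles would produce a $T$-invariant closed ideal $\overline{I_{\{e_j:j\in A\}}}$, which is proper because the remaining $e_j$'s are band-disjoint from $\sum_{j\in A}e_j$, contradicting ideal irreducibility of $T$. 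Classical Perron--Frobenius then yields a strictly positive Perron eigenvector $x_0=\sum d_je_j$ (all $d_j>0$) with eigenvalue $\lambda=r(T|_{\ker(1-K)})>0$, and $Kx_0=x_0$ is automatic. Sandwiching $a(\sum x_i)\leq x_0\leq b(\sum x_i)$ with the quasi-interior $\sum x_i$ provided by Theorem~\ref{hyper} shows $x_0$ is quasi-interior. The strictly positive $x_0^*$ is produced by the same argument on $T^*|_{\ker(1-K^*)}$, and the two Perron values coincide by duality. The bound $\lambda\leq r(T)$ is automatic, and the ``moreover'' clause is immediate from Lemma~\ref{locaq}. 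Parts (ii) and (iii) drop out from Lemmas~\ref{locaq} and~\ref{scc} applied to the eigenvectors just produced.

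Part (iv) is the main work; I reduce it to $\ker(\lambda-T)=\mathbb{R}x_0$ via a sign-dichotomy. For any $u\in\ker(\lambda-T)$, positivity of $T$ gives $Tu^+=T(u\vee 0)\geq Tu\vee 0=\lambda u^+$, and testing against the strictly positive $x_0^*$ (which satisfies $T^*x_0^*=\lambda x_0^*$) collapses this to $Tu^+=\lambda u^+$, and symmetrically $Tu^-=\lambda u^-$. Any non-zero positive $\lambda$-eigenvector $v$ is quasi-interior, since $I_v$ is $T$-invariant (from $|Ty|\leq T|y|\leq cTv=c\lambda v$ when $|y|\leq cv$) and ideal irreducibility of $T$ forces $\overline{I_v}=X$. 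Since $u^+\wedge u^-=0$ and a quasi-interior point has trivial band complement, if one of $u^{\pm}$ is non-zero the other must vanish; so every $u\in\ker(\lambda-T)$ has a definite sign. Finally, for $u\geq 0$ an eigenvector, the scalar $\alpha=x_0^*(u)/x_0^*(x_0)$ makes $w=u-\alpha x_0$ a $\lambda$-eigenvector with $x_0^*(w)=0$, whence the sign-dichotomy combined with strict positivity of $x_0^*$ forces $w=0$. Once $\ker(\lambda-T)=\mathbb{R}x_0$ is established, $Sx_0\in\ker(\lambda-T)=\mathbb{R}x_0$ is automatic for $S\in\{T\}'$; if $S>0$ then $Sx_0\neq 0$ (otherwise the null ideal of $S$ contains the quasi-interior $x_0$, forcing $S=0$), so $\lambda_S>0$ and $r(S)\geq\lambda_S>0$. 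The main obstacle I foresee is the sign-dichotomy itself --- controlling $u^{\pm}$ for an arbitrary real $\lambda$-eigenvector --- which is resolved precisely by the $x_0^*$-test on $Tu^{\pm}$.
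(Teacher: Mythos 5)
Your proof is correct, but parts (i)–(iii) take a genuinely different route from the paper. The paper builds the auxiliary operator $\widetilde{K}=\bigl(\sum_1^\infty T^n\bigr)K\bigl(\sum_1^\infty T^n\bigr)$, which is compact and strongly expanding (hence ideal irreducible), and invokes Theorem~\ref{dsns}\eqref{dsnsi1} directly on $\widetilde{K}$ to produce one-dimensional kernels $\ker(r(\widetilde K)-\widetilde K)=\mathrm{Span}\{x_0\}$ and $\ker(r(\widetilde K)-\widetilde K^*)=\mathrm{Span}\{x_0^*\}$; since $T$ and $K$ both commute with $\widetilde K$, they preserve these lines, and the common eigenvectors fall out at once with no finite-dimensional analysis. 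You instead observe that $T\in[K\rangle\cap\langle K]$ already forces $[K\rangle$ and $\langle K]$ to be ideal irreducible, apply Theorem~\ref{hyper} directly to $K$, and then run a finite-dimensional Perron--Frobenius argument on the nonnegative matrix of $T|_{\ker(r(K)-K)}$ with respect to the cycle sums $e_j$, using ideal irreducibility of $T$ to show this matrix is combinatorially irreducible (a proper invariant set $A$ of cycles would produce the proper $T$-invariant closed ideal $\overline{I_{\{e_j:j\in A\}}}$, disjoint from the remaining $e_l$'s). The paper's $\widetilde{K}$-device is shorter and gets one-dimensionality for free; your approach is more structural and avoids the auxiliary operator entirely, at the cost of the matrix-irreducibility argument and the observation that the dual Perron values agree (which you handle either via transpose/similarity or via the pairing $\lambda x_0^*(x_0)=\lambda' x_0^*(x_0)$). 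For part (iv), where the paper simply cites the argument of Theorem~5.2 in Schaefer, your sign-dichotomy ($Tu^\pm=\lambda u^\pm$ via the $x_0^*$-test, positive $\lambda$-eigenvectors are quasi-interior, disjoint quasi-interior points cannot coexist) is exactly that classical Krein--Rutman--Schaefer argument, spelled out; so there you coincide with the paper's intended reasoning.
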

\eqref{perisi1} and \eqref{perisi2} have been proved in \cite{gt2} via semigroup techniques. Here we provide a direct elementary proof.
\begin{proof}
Without loss of generality, assume $||T||<1$. Recall that since $T$ is ideal irreducible, $\sum_1^\infty T^n$ is strongly expanding. Hence so is $\widetilde{K}:=(\sum_1^\infty T^n)K(\sum_1^\infty T^n)$. In particular, 
$\widetilde{K}$ is an ideal irreducible compact operator.
Applying Theorem~\ref{dsns}\eqref{dsnsi1} to $\widetilde{K}$, we obtain a quasi-interior point $x_0>0$ and a strictly positive functional $x_0^*>0$ such that
$$\ker(r(\widetilde{K})-\widetilde{K})=\mathrm{Span}\{x_0\}\mbox{\;and\;}
\ker(r(\widetilde{K})-\widetilde{K}^*)=\mathrm{Span}\{x_0^*\}.$$
\indent Since $T\leftrightarrow \widetilde{K}$, the one-dimensional spaces $\ker(r(\widetilde{K})-\widetilde{K})$ and $\ker(r(\widetilde{K})-\widetilde{K}^*)$ are invariant under $T$ and $T^*$, respectively. 
Thus there exist $\lambda,\delta\in \mathbb{R}$ such that $$Tx_0=\lambda x_0\mbox{\;and\;}T^*x_0^*=\delta x_0^*.$$
Note that $\delta x_0^*(x_0)=T^*x_0^*(x_0)=x_0^*(Tx_0)=\lambda x_0^*(x_0)$. Hence $\delta=\lambda$. Since $T>0$ can not vanish on quasi-interior points, we have $\lambda>0$. The ``moreover'' part in \eqref{perisi1} as well as \eqref{perisi2} follows from Lemma~\ref{locaq}.\par
Since $K\leftrightarrow \widetilde{K}$, a similar argument yields $0<\mu\leq r(K)$ such that $$Kx_0=\mu x_0\mbox{\;and\;}K^*x_0^*=\mu x_0^*.$$
By Krein-Rutman Theorem, $r(K)$ is an eigenvalue of $K$. Hence Lemma~\ref{locaq} implies $\mu=r(K)$.\par

\eqref{perisi3} follows from Lemma~\ref{scc}.\par

\eqref{perisi4} Using the same argument as in Theorem 5.2, p.~329, \cite{scha3}, one can easily see that $\ker(\lambda-T)=\mathrm{Span}\{x_0\}$. Since $S\leftrightarrow T$, we know $Sx_0\in\ker(\lambda-T)$, and thus $Sx_0=\lambda_S x_0$ for some $\lambda_S\in\mathbb{R}$. If $S>0$, 
then $Sx_0>0$, thus $r(S)\geq \lambda_S>0$.
\end{proof}

\begin{example}In Proposition~\ref{peris}\eqref{perisc3i1}, $x_0$ and $x_0^*$ depend on $T$. Let $T$ and $K$ be as in Example~\ref{remi2}. It is easy to see that the positive eigenvectors of $T$ are exactly all the positive scalar multiples of $(1,1,\cdots,1)^t$.
Let $R=\left[
    \begin{smallmatrix}
      1 & 1 & 1 & 1 \\
       1 &1  & 1 &1  \\
      1 & 1 & \frac{5}{2} &\frac{5}{2}  \\
     1 & 1 & \frac{5}{2}& \frac{5}{2} \\
    \end{smallmatrix}
  \right]$. Then $R$ is also irreducible and commutes with $K$. But the positive eigenvectors of $R$ are positive scalar multiples of $(1,1,2,2)^t$.
\end{example}

\begin{remark}Sirotkin~\cite{relom} proved a Lomonosov-type theorem for positive operators on real Banach lattices, which implies that if $T>0$ is non-scalar and $K>0$ is compact and $S\leftrightarrow T\leftrightarrow K$, 
then $S$ has a non-trivial invariant closed subspace; cf.~Corollary~2.4, \cite{relom}. Proposition~\ref{peris}\eqref{perisi4} implies that, in such a chain, if $S$ is also non-scalar, then
either $T$ has a non-zero proper invariant closed ideal, or $S$ has a non-trivial hyperinvariant closed subspace (namely, the eigenspace of $S$ for $\lambda_S$).
\end{remark}

We end this section with the following proposition.

\begin{proposition}\label{perisc3}Suppose $T>0$ is ideal irreducible and $K>0$ is compact.
\begin{enumerate}
\item\label{perisc3i1} There exist a quasi-interior point $x_0>0$ and a strictly positive functional $x_0^*$ such that for any $S\in L(X)$ with $T\leftrightarrow S\leftrightarrow K$, $$Sx_0=\lambda_S x_0,\;\;S^*x_0^*=\lambda_S x_0^*,$$
for some $\lambda_S\in\mathbb{R}$.
\item\label{perisc3i2} If, in addition, $S>0$, then $\liminf_n||S^nx||^{\frac{1}{n}}\geq \lambda_S$ and $\liminf_n||S^{n*}x^*||^{\frac{1}{n}}\geq \lambda_S$ for any $x>0$ and $x^*>0$.
In particular, $r(S)>0$. Moreover, $\lambda_S=r(S)$ if $r(S)$ is an eigenvalue of either $S$ or $S^*$.
\item\label{perisc3i3} Every operator semi-commuting with $S$ commutes with $S$.
\end{enumerate}
\end{proposition}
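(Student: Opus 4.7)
The strategy is to reuse the auxiliary operator $\widetilde{K}:=(\sum_{n=1}^\infty T^n)K(\sum_{n=1}^\infty T^n)$ from the proof of Proposition~\ref{peris} (after normalizing so that $||T||<1$). As in that proof, $\widetilde{K}$ is compact and strongly expanding, hence ideal irreducible, and Theorem~\ref{dsns}\eqref{dsnsi1} then supplies a quasi-interior point $x_0>0$ and a strictly positive functional $x_0^*>0$ spanning the one-dimensional eigenspaces $\ker(r(\widetilde{K})-\widetilde{K})$ and $\ker(r(\widetilde{K})-\widetilde{K}^*)$, respectively. The key observation is that these $x_0$ and $x_0^*$ depend only on $T$ and $K$, not on $S$, which is exactly what the uniformity in \eqref{perisc3i1} requires.

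For \eqref{perisc3i1}, any $S$ with $T\leftrightarrow S\leftrightarrow K$ commutes with both $T$ and $K$, hence with $\sum T^n$ and therefore with $\widetilde{K}$. Consequently the one-dimensional eigenspaces above are $S$- and $S^*$-invariant, so $Sx_0=\lambda_S x_0$ and $S^*x_0^*=\mu_S x_0^*$ for real scalars $\lambda_S$, $\mu_S$; the pairing $\langle S^*x_0^*,x_0\rangle=\langle x_0^*,Sx_0\rangle$ combined with $x_0^*(x_0)>0$ forces $\mu_S=\lambda_S$.

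For \eqref{perisc3i2}, the two $\liminf$ estimates follow directly from Lemma~\ref{locaq} applied to $S$ with the eigenvectors $x_0$ and $x_0^*$, and the ``moreover'' clause is exactly its additional assertion. The one genuine point is $\lambda_S>0$ when $S>0$: one cannot simply quote Proposition~\ref{peris}\eqref{perisi4} applied to the pair $(T,\widetilde{K})$, because in the present setting we have not assumed $T\leftrightarrow K$, so $\widetilde{K}$ need not commute with $T$. I would argue directly instead: if $\lambda_S=0$ then $Sx_0=0$, and for any $y\in I_{x_0}$ we may pick $c\geq 0$ with $|y|\leq cx_0$, giving $|Sy|\leq S|y|\leq cSx_0=0$. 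Thus $S$ vanishes on the norm-dense ideal $I_{x_0}$, and continuity yields $S=0$, contradicting $S>0$. Hence $\lambda_S>0$, and in particular $r(S)\geq\lambda_S>0$.

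Part \eqref{perisc3i3} is then immediate from Lemma~\ref{scc} applied with $U=S$ and the eigenvectors $x_0$, $x_0^*$ produced above. The main obstacle in the whole argument is simply recognizing that it is $S$, rather than $T$ or $K$ individually, that commutes with $\widetilde{K}$ in this looser setting; once that is in hand, the entire proof is a direct reprise of the techniques developed for Proposition~\ref{peris}, with the minor density argument above replacing the appeal to \eqref{perisi4}.
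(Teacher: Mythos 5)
Your proof follows exactly the paper's route: pass to $\widetilde{K}=\left(\sum_1^\infty T^n\right)K\left(\sum_1^\infty T^n\right)$, invoke Theorem~\ref{dsns}\eqref{dsnsi1} to get $x_0,x_0^*$ depending only on $T$ and $K$, observe that it is $S$ (not $T$ or $K$) that commutes with $\widetilde{K}$ in the chain $T\leftrightarrow S\leftrightarrow K$, and then cite Lemma~\ref{locaq} for \eqref{perisc3i2} and Lemma~\ref{scc} for \eqref{perisc3i3}. Your added verification that $\lambda_S>0$ --- rightly noting that Proposition~\ref{peris}\eqref{perisi4} is unavailable here since $T\leftrightarrow K$ is not assumed, and arguing instead that a positive $S$ cannot annihilate the quasi-interior point $x_0$ --- correctly fills in a step the paper compresses into ``\eqref{perisc3i2} follows from Lemma~\ref{locaq},'' which by itself would give only a vacuous bound if $\lambda_S$ were $0$.
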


\begin{proof}Without loss of generality, assume $||T||<1$. As before, it is easily seen that $\widetilde{K}:=\left(\sum_1^\infty T^n\right)K\left(\sum_1^\infty T^n\right)$ is a compact ideal irreducible operator. 
Thus by Theorem~\ref{dsns}\,\eqref{dsnsi1}, there exist a quasi-interior point $x_0>0$ and a strictly positive functional $x_0^*>0$ such that
$$\ker(r(\widetilde{K})-\widetilde{K})=\mathrm{Span}\{x_0\}\mbox{\;and\;}\ker(r(\widetilde{K})-\widetilde{K}^*)=\mathrm{Span}\{x_0^*\}.$$
Since $S\leftrightarrow \widetilde{K}$, these one-dimensional spaces are invariant under $S$ and $S^*$, respectively. From this (\ref{perisc3i1}) follows easily.\par
\eqref{perisc3i2} follows from Lemma~\ref{locaq}. 
\eqref{perisc3i3} follows from Lemma~\ref{scc}.
\end{proof}

Clearly, \eqref{perisc3i2} improves Theorem~\ref{aab1}.

\begin{remark}\label{remark11}Note that the operator $K$ in Theorem~\ref{hyper} can be replaced with a peripherally Riesz operator $R>0$. The same proof goes along.
In Propositions~\ref{peris} and \ref{perisc3}, the operator $K$ can also be replaced with a peripherally Riesz operator $R>0$. Simply note that Lemma~\ref{pRiesz} yields a compact operator to take the position of $R$.
\end{remark}

\section{Applications}\label{sec4}

Using the results in the previous section, we can establish a few interesting criteriors on ideal reducibility. The following is immediate by Lemma~\ref{turo} and Theorem~\ref{hyper}.

\begin{proposition}\label{app1}Let $K>0$ be compact. Then $[K\rangle$ and $\langle K]$ both have non-trivial invariant closed ideals if any of the following is satisfied:
\begin{enumerate}
 \item\label{app1i1} $r(K)=0$, or $\liminf_n||K^nx||^{\frac{1}{n}}<r(K)$ for some $x>0$, or $\liminf_n||K^{n*}x^*||^{\frac{1}{n}}<r(K)$ for some $x^*>0$;
\item there exists $S\in L(X)$ such that either $SK>KS$ or $SK<KS$.
\end{enumerate}
\end{proposition}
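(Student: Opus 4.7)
The plan is to argue by contrapositive: assume that at least one of $[K\rangle$ or $\langle K]$ is ideal irreducible and show that none of the listed hypotheses can hold. Since Lemma~\ref{turo} and Theorem~\ref{hyper} are precisely engineered for this setting, they become simultaneously available and do essentially all of the work.

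First I would dispose of (i). Lemma~\ref{turo} immediately yields $r(K)>0$, ruling out the first sub-condition. To handle the remaining two sub-conditions, rescale so that $r(K)=1$ and apply part~\eqref{hyperi4} of Theorem~\ref{hyper}: it gives $\lim_n\|K^nx\|^{1/n}=1$ for every $x>0$ and $\lim_n\|K^{n*}x^*\|^{1/n}=1$ for every $x^*>0$. Undoing the scaling, both limits equal $r(K)$, so neither of the strict $\liminf$ inequalities in (i) can occur.

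For (ii), note that the operator $S$ is only assumed to lie in $L(X)$, not in $L_+(X)$, so I cannot invoke the identity $[K\rangle=\langle K]=L_+(X)\cap\{K\}'$ from Theorem~\ref{hyper}\eqref{hyperi5} directly. Instead, I would use Theorem~\ref{hyper}\eqref{hyperi3} to produce a quasi-interior $x_0>0$ and a strictly positive $x_0^*$ with $Kx_0=r(K)x_0$ and $K^*x_0^*=r(K)x_0^*$, and then apply Lemma~\ref{scc} to the semi-commuting pair $K,S$. Since Lemma~\ref{scc} is stated for arbitrary $U,V\in L(X)$ (not only positive ones), it forces $KS=SK$ whenever $S$ semi-commutes with $K$, contradicting both $SK>KS$ and $SK<KS$.

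The only mild subtlety, and certainly not a genuine obstacle, is exactly this point about the lack of positivity of $S$ in (ii): one must appeal to Lemma~\ref{scc} rather than to part~\eqref{hyperi5} of Theorem~\ref{hyper}. Beyond that, the proof is a straightforward transcription of Lemma~\ref{turo} and Theorem~\ref{hyper} in contrapositive form, yielding that \emph{both} semigroups $[K\rangle$ and $\langle K]$ are ideal reducible whenever even one of the listed hypotheses holds.
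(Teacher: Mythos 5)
Your proof is correct and follows essentially the same path as the paper, which simply observes that the proposition is immediate from Lemma~\ref{turo} and Theorem~\ref{hyper}. One small over-caution in part (ii): you worry that Theorem~\ref{hyper}\eqref{hyperi5} is unusable because $S$ is not assumed positive, but the \emph{first} sentence of \eqref{hyperi5} reads ``Every operator semi-commuting with $K$ commutes with $K$'' and is stated for arbitrary operators (it is only the ``in particular'' identity $[K\rangle=\langle K]=L_+(X)\cap\{K\}'$ that restricts to positives). So you could cite \eqref{hyperi5} directly; routing through \eqref{hyperi3} and Lemma~\ref{scc} instead just reproves that sentence, which is harmless and gives the same conclusion.
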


Proposition~\ref{app1}\eqref{app1i1} clearly improves Theorem~\ref{drnl}.\par
Recall that if $T>0$ and $S>0$ semi-commute then $r(TS)\leq r(T)r(S)$.

\begin{proposition}\label{sver}Suppose $T$ and $K$ are two non-zero semi-commuting positive operators such that $K$ is compact. Then $T$ has non-trivial invariant closed ideals if any of the following is satisfied:
\begin{enumerate}
\item\label{sveri1} $r(K)=0$, or $\liminf_n||K^nx||^{\frac{1}{n}}<r(K)$ for some $x>0$, or $\liminf_n||K^{n*}x^*||^{\frac{1}{n}}<r(K)$ for some $x^*>0$;
\item\label{sveri2} $r(TK)=0$, or $\liminf_n||T^nx||^{\frac{1}{n}}<\frac{r(TK)}{r(K)}$ for some $x>0$, or $\liminf_n||T^{n*}x^*||^{\frac{1}{n}}<\frac{r(TK)}{r(K)}$ for some $x^*>0$;
\item\label{sveri3} there exists a quasi-nilpotent operator $S>0$ semi-commuting with $T$;
\item\label{sveri4} there exists $ S\in L(X)$ such that $ST<TS$ or $ST>TS$.
    \end{enumerate}
\end{proposition}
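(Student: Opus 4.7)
The plan is to treat the four conditions separately, each time assuming (for contradiction in all but the first case) that $T$ is ideal irreducible and then deriving a contradiction from the structure theorems of Section~\ref{sec3}. Two preliminary observations drive everything. First, since $T$ and $K$ semi-commute, $T$ itself belongs to $[K\rangle$ or to $\langle K]$; consequently any common nontrivial invariant closed ideal of that semigroup is invariant under $T$. Second, once $T$ is assumed ideal irreducible, Corollary~\ref{pcu1} upgrades the semi-commutation to the full commutation $TK=KT$, which unlocks Proposition~\ref{peris} and provides a quasi-interior $x_0$ and strictly positive $x_0^*$ shared by $T$ and $K$.

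For \eqref{sveri1} I simply quote Proposition~\ref{app1}\eqref{app1i1}: its hypotheses are exactly ours, and the semigroup containing $T$ supplies the desired ideal. For the first sub-case of \eqref{sveri2}, namely $r(TK)=0$, I would pivot onto the compact positive operator $TK$: if $TK\geq KT$, left-multiplication by $T\geq 0$ gives $T^2K\geq TKT$, so $T\in[TK\rangle$, and the dual inequality analogously places $T\in\langle TK]$; applying Proposition~\ref{app1}\eqref{app1i1} with $TK$ in place of $K$ finishes the sub-case. For the remaining sub-cases of \eqref{sveri2} I would first use \eqref{sveri1} to reduce to $r(K)>0$, then assume $T$ is ideal irreducible. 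Corollary~\ref{pcu1} gives $TK=KT$, and Proposition~\ref{peris}\eqref{perisi1} produces a quasi-interior $x_0>0$ with $Tx_0=\lambda x_0$ and $Kx_0=r(K)x_0$, whence $TKx_0=\lambda r(K)x_0$. Since $TK$ is compact, Krein--Rutman makes $r(TK)$ an eigenvalue of $(TK)^*$, and Lemma~\ref{locaq}\eqref{locaqi1} applied to $TK$ pins down $\lambda r(K)=r(TK)$, i.e., $\lambda=r(TK)/r(K)$. Then Proposition~\ref{peris}\eqref{perisi2} yields $\liminf_n||T^nx||^{1/n}\geq \lambda=r(TK)/r(K)$ for every $x>0$, contradicting the hypothesis; the dual growth inequality is handled symmetrically via Lemma~\ref{locaq}\eqref{locaqi2}.

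For \eqref{sveri3} and \eqref{sveri4} the arguments are uniform. Assume $T$ is ideal irreducible; Corollary~\ref{pcu1} gives $TK=KT$, and Proposition~\ref{peris}\eqref{perisi3} then asserts that every operator (positive or not) semi-commuting with $T$ must commute with $T$. In \eqref{sveri3} this forces $S\leftrightarrow T$, so Proposition~\ref{peris}\eqref{perisi4} produces $r(S)>0$, contradicting quasi-nilpotence of $S$. In \eqref{sveri4}, the strict inequality $ST<TS$ or $ST>TS$ means $S$ semi-commutes with $T$ but does not commute with it, directly violating Proposition~\ref{peris}\eqref{perisi3}. The main obstacle I anticipate is the identification $\lambda=r(TK)/r(K)$ in the second sub-case of \eqref{sveri2}: it is the step that requires bridging the purely spectral hypothesis on $T$ to the joint eigendata of $T$ and $K$, and it succeeds only because compactness of $K$ forces compactness of $TK$ and hence the Krein--Rutman conclusion needed to trigger Lemma~\ref{locaq}.
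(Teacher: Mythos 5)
Your proof is correct and follows the paper's route in all essentials: argue by contradiction, upgrade semi-commutation to $TK=KT$ via Corollary~\ref{pcu1}, and harvest contradictions from Propositions~\ref{app1}, \ref{peris} and Lemmas~\ref{scc}, \ref{locaq}. Parts \eqref{sveri1}, \eqref{sveri3}, \eqref{sveri4} match the paper verbatim in substance (the paper cites Lemma~\ref{scc} where you cite Proposition~\ref{peris}\eqref{perisi3}, but the latter is derived from the former, so it is the same argument). The only real variation is inside \eqref{sveri2}. To rule out $r(TK)=0$, the paper observes $TK>0$ (an ideal irreducible $T$ is strictly positive, so $TKx_0>0$) and then applies Proposition~\ref{peris}\eqref{perisi4} to $S=TK\leftrightarrow T$; you instead place $T$ in $[TK\rangle$ or $\langle TK]$ and invoke Proposition~\ref{app1}\eqref{app1i1} for the compact operator $TK$. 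That is a neat alternative which needs only semi-commutation, but you should state explicitly that $TK>0$ (automatic under the contradiction hypothesis, as above; if you meant to argue directly, the degenerate case $TK=0$ is handled by noting $\overline{I_{KX}}\subset\ker T$ gives a nontrivial invariant closed ideal). For the growth bound, the paper applies Proposition~\ref{peris} with $TK$ in place of $K$, obtains $\lim_n\|(TK)^nx\|^{1/n}=r(TK)$ from Lemma~\ref{locaq}, and transfers to $T$ via $(TK)^n=K^nT^n$ and $\|K^nT^nx\|\le\|K^n\|\,\|T^nx\|$; you instead apply Proposition~\ref{peris} to the pair $(T,K)$, pin down $\lambda r(K)=r(TK)$ via Krein--Rutman and Lemma~\ref{locaq}\eqref{locaqi1} applied to $TK$, and then quote Proposition~\ref{peris}\eqref{perisi2}. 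Both rest on compactness of $TK$ and on Lemma~\ref{locaq} and are interchangeable; making the eigenvalue identity $\lambda=r(TK)/r(K)$ explicit is a small bonus of your version.
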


\begin{proof}We prove by way of contradiction. Assume that $T$ is ideal irreducible. Then $TK=KT$ by Corollary~\ref{pcu1}. \eqref{sveri1} is clear by Proposition~\ref{app1}.
It is easy to verify that $TK>0$. Since $TK\leftrightarrow T$, we have $r(TK)>0$ by Proposition~\ref{peris}\eqref{perisi4}. Replacing $K$ with $TK$ in Proposition~\ref{peris}, we have
$$Tx_0=\lambda x_0,\;T^*x_0^*=\lambda x_0^*;\; TKx_0= r(TK)x_0, \;(TK)^*x_0^*=r(TK)x_0^*.$$
Applying Lemma~\ref{locaq} to $TK$, we have $$r(TK)=\lim_n||(TK)^nx||^{\frac{1}{n}}\leq r(K)\liminf_n||T^nx||^{\frac{1}{n}}$$ for any $x>0$. The dual case can be proved similarly. This proves \eqref{sveri2}.
\eqref{sveri3} follows from Lemma~\ref{scc} and Proposition~\ref{peris}\eqref{perisi4}.
\eqref{sveri4} also follows from Lemma~\ref{scc}.
\end{proof}
Proposition~\ref{sver}\eqref{sveri1} clearly improves Theorem~\ref{aab2}.\par
\medskip
In \cite{abr}, it is proved that Theorem~\ref{aab2} remains true if $K$ is merely assumed to dominate a non-zero AM-compact operator; see Theorems~10.25 and 10.26 of \cite{abr}. The authors also asked the following natural question. If $K$ dominates a non-zero AM-compact operator, then in case \eqref{aab2i2} of Theorem~\ref{aab2}, can we replace the quasinilpotency of $K$ by the local quasinilpotency at a positive vector? The following example shows that it generally fails even when $K$ is a non-zero positive AM-compact operator. Let $X=\ell_2$. Let $L$ and $R$ be the left and right shifts on $X$, respectively. Let $T=L+R$. Since the order intervals in $\ell_2$ are compact, both $T $ and $L$ are AM-compact. It is straightforward verifications that $TL<LT$, $Le_1=0$ where $e_1=(1,0,0,\cdots)$, but $T$ is ideal irreducible. Surprisingly, the question has an affirmative answer when $K$ is a compact operator; this follows from our Proposition~\ref{sver}\eqref{sveri1}.
\medskip

We now turn to another application of Corollary~\ref{pcu1}.
In \cite{jbr}, it is proved that if two operators $T>0$ and $S>0$ semi-commute and are both compact, then their commutator $TS-ST$ is quasi-nilpotent. It is also shown there that there exist two semi-commuting 
operators $T>0$ and $S>0$, neither of which is compact, such that $TS-ST$ is not quasi-nilpotent. So the authors asked the following question.

\begin{theorem2}
Suppose $T$ and $K$ are two semi-commuting positive operators such that $K$ is compact. Is $TK-KT$ necessarily quasi-nilpotent?
\end{theorem2}

Theorem\;3.6 in \cite{drn2} gave a partial solution of this question by proving that the commutator is indeed quasi-nilpotent provided that, in addition, it semi-commutes with $K$.\par

We now answer this question and prove that it is generally true. To this end, we need to recall some necessary notions. A collection $\mathcal{C}$ of closed subspaces of $X$ is called a chain if it is totally ordered under inclusion. 
For any $M\in\mathcal{C}$, the predecessor $M_-$ of $M$ in $\mathcal{C}$ is defined to be the closed linear span of all proper closed subspaces of $M$ that belong to $\mathcal{C}$.
The following lemma is straightforward to verify.

\begin{lemma}\label{mcha}Let $\mathcal{C}$ be a chain of closed ideals of $X$, $M\in\mathcal{C}$.
Then $M_-$ is a closed ideal of $X$, $M_-\subset M$ and $\mathcal{C}\cup\{M_-\}$ is chain.
\end{lemma}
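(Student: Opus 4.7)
The plan is to unpack the definition of $M_-$ into a more workable form and then verify the three claims in turn. Since $\mathcal{C}$ is a chain under inclusion, the family $\mathcal{F}:=\{N\in\mathcal{C}: N\subsetneq M\}$ is itself totally ordered, and the closed linear span of $\bigcup \mathcal{F}$ coincides with the closure of $\bigcup \mathcal{F}$ (a union of a chain of subspaces is already a subspace). So I would first observe
$$M_- = \overline{\bigcup_{N\in\mathcal{F}} N},$$
which reduces everything to manipulating this union.

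For the first claim, $\bigcup_{N\in\mathcal{F}} N$ is an ideal of $X$: given $x$ in the union and $|y|\leq |x|$, pick $N\in\mathcal{F}$ with $x\in N$; then $y\in N$ by the ideal property of $N$. Since the closure of an ideal in a Banach lattice is again an ideal, $M_-$ is a closed ideal. For $M_-\subset M$, each $N\in\mathcal{F}$ satisfies $N\subset M$, so $\bigcup \mathcal{F}\subset M$; taking closures and using that $M$ is closed gives the inclusion.

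For the third claim, I fix $N\in\mathcal{C}$ and check comparability with $M_-$ by cases using that $\mathcal{C}$ is a chain (so $N$ is comparable with $M$). If $N\subsetneq M$, then $N\in\mathcal{F}$, hence $N\subset \bigcup\mathcal{F}\subset M_-$. If $N\supseteq M$, then $M_-\subset M\subset N$ by the second claim. This covers all cases and shows $\mathcal{C}\cup\{M_-\}$ remains totally ordered.

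None of these steps is delicate; the only point requiring a moment of care is noting that the closed linear span in the definition equals the closure of the union (using the chain property), since otherwise one would have to argue separately that finite sums of elements from different $N$'s still lie in $\bigcup \mathcal{F}$. I do not foresee a genuine obstacle.
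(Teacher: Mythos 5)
Your proof is correct and gives exactly the standard verification that the paper elides (it only says the lemma is ``straightforward to verify''); the key reduction to $M_- = \overline{\bigcup\mathcal{F}}$ via the chain property is the right observation, and the three claims then follow as you describe. The only micro-caveat is the degenerate case $\mathcal{F}=\emptyset$, where the closed linear span is $\{0\}$ rather than the closure of the empty union, but the lemma is trivially true there and this does not affect anything.
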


\begin{lemma}[\cite{drn2000}]\label{mchb}Let $\mathcal{C}$ be a chain of closed ideals of $X$.
Then it is maximal as a chain of closed subspaces of $X$ if and only if it is maximal as a
chain of closed ideals of $X$.\end{lemma}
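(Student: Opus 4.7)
The plan is to handle the two implications separately. The direction ``maximal as a chain of closed subspaces $\Rightarrow$ maximal as a chain of closed ideals'' is immediate, because closed ideals are closed subspaces: any proper extension of $\mathcal{C}$ by an additional closed ideal would, in particular, be a proper extension by an additional closed subspace.

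For the non-trivial direction, I would argue by contradiction: suppose $\mathcal{C}$ is maximal as a chain of closed ideals but admits a closed subspace $Y \notin \mathcal{C}$ such that $\mathcal{C} \cup \{Y\}$ is still a chain. Maximality as a chain of closed ideals forces $\{0\}, X \in \mathcal{C}$. I would then introduce the closed ideals
$$M^- := \overline{\bigcup\{M \in \mathcal{C} : M \subseteq Y\}}, \qquad M^+ := \bigcap\{M \in \mathcal{C} : Y \subseteq M\},$$
both well-defined and comparable with every member of $\mathcal{C}$, satisfying $M^- \subseteq Y \subseteq M^+$. If either $M^-$ or $M^+$ fails to lie in $\mathcal{C}$, it already provides a strictly larger chain of closed ideals, a contradiction; and if $M^- = M^+$, then $Y$ itself is this common closed ideal, contradicting $Y \notin \mathcal{C}$. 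This reduces the problem to the case $M^- \subsetneq M^+$ with both in $\mathcal{C}$ and no ideal of $\mathcal{C}$ strictly in between.

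The heart of the argument is then to produce a closed ideal strictly between $M^-$ and $M^+$. Passing to the quotient Banach lattice $E := M^+/M^-$, the image $\pi(Y)$ is a closed subspace of $E$ which is neither $\{0\}$ nor all of $E$, so $\dim E \geq 2$. I would then invoke the subclaim that any Banach lattice of dimension at least two possesses a non-trivial closed ideal. To establish this, I would first produce two non-zero disjoint positive vectors in $E$: given linearly independent $x, y \in E_+$, the Riesz decomposition yields disjoint components $(x - \lambda y)^+$ and $(\lambda y - x)^+$, and a short analysis of $A := \{\lambda > 0 : \lambda y \leq x\}$ and $B := \{\lambda > 0 : x \leq \lambda y\}$, using the Archimedean property together with closedness of the positive cone, shows that some $\lambda > 0$ lies outside $A \cup B$, making both components non-zero. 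Given a disjoint pair $u \perp v$ in $E_+$, the closed ideal $\overline{I_u}$ cannot contain $v$ (disjointness persists under norm limits), so it is a non-trivial closed ideal of $E$.

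Finally, pulling this ideal back through the quotient map yields a closed ideal $J$ with $M^- \subsetneq J \subsetneq M^+$. Since every $M \in \mathcal{C}$ satisfies $M \subseteq M^-$ or $M \supseteq M^+$, the ideal $J$ is comparable with every element of $\mathcal{C}$, so $\mathcal{C} \cup \{J\}$ is a strictly larger chain of closed ideals, contradicting maximality. The main obstacle I expect is precisely the subclaim on Banach lattices of dimension $\geq 2$, where the delicate interplay between the Archimedean property and closedness of the positive cone is needed to rule out the possibility that the positive cone is totally ordered.
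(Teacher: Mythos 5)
Your proof is correct, but note that the paper itself gives no proof of this lemma: it is cited directly from Drnov\v{s}ek's 2000 triangularization paper, so there is nothing internal to compare your argument against. Your reduction is the right one: maximality in the closed-ideal lattice forces $\{0\},X\in\mathcal{C}$, the ideals $M^-,M^+$ you define are comparable with every member of $\mathcal{C}$ (hence lie in $\mathcal{C}$), and the hypothetical intruder $Y$ is wedged strictly between them with nothing of $\mathcal{C}$ in the gap, so the whole thing hinges on the quotient $E=M^+/M^-$ having a non-trivial closed ideal once $\dim E\geq 2$. Your treatment of that subclaim is sound: $A$ and $B$ are closed in $(0,\infty)$ by closedness of the cone, $A$ is a bounded initial segment and $B$ a final segment bounded away from $0$ by the Archimedean property, and $A\cap B=\emptyset$ because any $\lambda\in A\cap B$ would give $x=\lambda y$ against linear independence; hence some $\lambda$ escapes $A\cup B$ and the Jordan components of $x-\lambda y$ furnish a disjoint pair $u\perp v$ of non-zero positive vectors. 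The only place you can simplify is at the very end: rather than passing through $\overline{I_u}$ and arguing that disjointness survives closure (which works, since $\overline{I_u}$ sits inside the band $\{u\}^{\perp\perp}$ and $v\in\{u\}^\perp$), it is cleaner to observe that $\{u\}^\perp$ is itself a band, hence a closed ideal of $E$, and it is non-zero (it contains $v$) and proper (it omits $u$). Everything else, including the pullback $\pi^{-1}$ producing a closed ideal of $X$ strictly between $M^-$ and $M^+$ and comparable with all of $\mathcal{C}$, is standard and correctly deployed.
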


Recall that a collection $\mathcal{S}$ of positive operators is called \textbf{ideal triangularizable} if there exists a chain of closed ideals of $X$ such that each member in the chain is invariant under $\mathcal{S}$ and 
the chain itself is maximal as a chain of closed subspaces of $X$ (cf.~Lemma~\ref{mchb}). Such a chain is called an ideal triangularizing chain for $\mathcal{S}$.

\begin{theorem}\label{quasi}
Suppose $T$ and $K$ are two non-zero positive semi-commuting operators such that $K$ is compact. Then $S:=TK-KT$ is quasi-nilpotent.
\end{theorem}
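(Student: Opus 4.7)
The plan is to combine an ideal triangularization argument with Ringrose's theorem for compact operators. Observe first that $S = TK - KT$ is compact, since $K$ is compact and $T$ is bounded, so both $TK$ and $KT$ are compact.

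By Zorn's lemma together with Lemma~\ref{mcha}, fix a chain $\mathcal{C}$ of closed ideals of $X$, each invariant under both $T$ and $K$, that is maximal among such chains. Every member of $\mathcal{C}$ is then automatically $S$-invariant. On each gap $M \supsetneq M_-$ of $\mathcal{C}$, form the quotient Banach lattice $M/M_-$ with the induced positive operators $\widehat{T}, \widehat{K}$: the operator $\widehat{K}$ is compact (quotient of a compact operator), $\widehat{T}$ and $\widehat{K}$ inherit semi-commutation, and the pair $\{\widehat{T}, \widehat{K}\}$ is jointly ideal irreducible on $M/M_-$. Indeed, any non-trivial invariant closed ideal of the quotient would pull back to a $\{T,K\}$-invariant closed ideal strictly between $M_-$ and $M$, contradicting maximality of $\mathcal{C}$.

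The key step is to show $\widehat{T}\widehat{K} = \widehat{K}\widehat{T}$ on every such gap. This is trivial if $\widehat{T}$ or $\widehat{K}$ is zero, so assume both are non-zero and, without loss of generality, $\widehat{T}\widehat{K} \geq \widehat{K}\widehat{T}$. Then $\widehat{T}, \widehat{K} \in [\widehat{K}\rangle$, so the super right-commutant $[\widehat{K}\rangle$ contains the jointly ideal irreducible pair $\{\widehat{T}, \widehat{K}\}$ and is therefore itself ideal irreducible. Theorem~\ref{hyper}\eqref{hyperi5} then yields $[\widehat{K}\rangle = L_+(M/M_-) \cap \{\widehat{K}\}'$, so $\widehat{T}$ commutes with $\widehat{K}$; the case $\widehat{T}\widehat{K} \leq \widehat{K}\widehat{T}$ is handled symmetrically via $\langle \widehat{K}]$. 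Consequently $\widehat{S} = 0$ on every gap of $\mathcal{C}$.

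To conclude, extend $\mathcal{C}$ to a maximal chain $\mathcal{C}^*$ of $S$-invariant closed subspaces of $X$. A routine check shows that each gap $N \supsetneq N_-$ of $\mathcal{C}^*$ is contained in a gap $M^+ \supsetneq M^-$ of $\mathcal{C}$ (take $M^+ = \min\{M \in \mathcal{C} : M \supset N\}$ and $M^- = \max\{M \in \mathcal{C} : M \subset N_-\}$, and verify they are consecutive in $\mathcal{C}$), so $S(N) \subset S(M^+) \subset M^- \subset N_-$, giving $\widehat{S} = 0$ on every gap of $\mathcal{C}^*$. Ringrose's theorem applied to the compact operator $S$ and the maximal chain $\mathcal{C}^*$ now forces $\sigma(S) = \{0\}$, i.e.~$S$ is quasi-nilpotent. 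The main obstacle is the middle step, namely upgrading joint ideal irreducibility of $\{\widehat{T}, \widehat{K}\}$ to ideal irreducibility of the semigroup $[\widehat{K}\rangle$, which is precisely what lets Theorem~\ref{hyper} apply to the single operator $\widehat{K}$ on the gap.
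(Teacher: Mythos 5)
Your argument is correct and has the same overall shape as the paper's (triangularize by closed ideals, show the commutator vanishes on each gap, invoke Ringrose), but it diverges in two genuine ways that are worth recording.

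First, the paper replaces $T$ by $T+K$ so that $T\geq K$, then works with a maximal chain of closed ideals invariant under $T$ alone; $K$-invariance of these ideals is then automatic from $0\leq K\leq T$, and the gaps carry an ideal \emph{irreducible} induced operator $\widetilde T$, so Corollary~\ref{pcu1} gives commutation. You instead take a chain maximal among chains of $\{T,K\}$-invariant closed ideals; the gaps then carry a \emph{jointly} ideal irreducible pair, and you observe this suffices because joint irreducibility of $\{\widehat T,\widehat K\}\subset[\widehat K\rangle$ (resp.\ $\subset\langle\widehat K]$) forces $[\widehat K\rangle$ (resp.\ $\langle\widehat K]$) to be ideal irreducible, at which point Theorem~\ref{hyper}\eqref{hyperi5} applies directly. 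This is a legitimate mild generalization of how Corollary~\ref{pcu1} itself follows from Theorem~\ref{hyper}, and it avoids the $T\mapsto T+K$ trick entirely. (Do note explicitly that the semi-commutation passes to the quotient and retains its sign, and that one-dimensional gaps are also trivially handled, since Theorem~\ref{hyper} is stated under the running assumption $\dim X>1$.)

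Second, rather than manufacturing the triangularizing chain gap-by-gap as the paper does with $\mathcal C_1=\mathcal{C}\cup\bigcup_M\mathcal{C}_M$, you extend $\mathcal C$ by Zorn to a chain $\mathcal C^*$ maximal among chains of $S$-invariant closed subspaces and show each $\mathcal C^*$-gap sits inside a $\mathcal C$-gap, whence $\widehat S=0$ everywhere. This is cleaner. The one point you should make explicit before citing Ringrose (Theorem~7.2.3 of \cite{rad}, which is phrased for triangularizing chains, i.e.\ chains maximal as chains of closed subspaces tout court) is that $\mathcal C^*$ is in fact maximal among \emph{all} chains of closed subspaces: if $Y$ could be inserted in a $\mathcal C^*$-gap $N_-\subsetneq Y\subsetneq N$, then $S(Y)\subset S(N)\subset N_-\subset Y$ shows $Y$ is $S$-invariant, contradicting maximality of $\mathcal C^*$ among chains of $S$-invariant subspaces. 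With that one sentence added, the argument is complete.
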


\begin{proof}Since replacing $T$ with $T+K$ does not change the commutator, we can assume $T\geq K>0$. Let $\mathcal{C}$ be a maximal chain of invariant closed ideals of $T$ (existence of such a chain follows from Zorn's lemma). 
Take any $M\in \mathcal{C}$. It is easily seen that $M_-$ is invariant under $T$. Hence, by Lemma~\ref{mcha},  $M_-\in\mathcal{C}$. 
We claim that the induced quotient operator $\widetilde{T}$ on $M/M_-$ is ideal irreducible. 
Suppose that, otherwise, $J$ is a non-trivial closed ideal of $M/M_-$ invariant under $\widetilde{T}$. We consider $\pi^{-1}(J)=\{x\in M:[x]\in J\}$, where $\pi$ is the canonical quotient mapping from $M$ onto $M_-$. 
By Proposition~1.3, p.~156, \cite{scha3}, 
$\pi^{-1}(J)$ is a closed ideal of $M$, hence is a closed ideal of $X$. It is clearly invariant under $T$, properly contains $M_-$ and is properly contained in $M$. Thus it is easily seen that $\pi^{-1}(J)$ is comparable 
with members of $\mathcal{C}$. But $\pi^{-1}(J)\not\in\mathcal{C}$, contradicting maximality of $\mathcal{C}$.\par

It follows that $\widetilde{T}$ is ideal irreducible on $M/M_-$. Since $T\geq K\geq 0$, both $M$ and $M_-$ are invariant under $K$; hence the quotient operator $\widetilde{K}$ is well defined on $M/M_-$. 
Corollary~\ref{pcu1} implies $\widetilde{S}=\widetilde{T}\widetilde{K}-\widetilde{K}\widetilde{T}=0$.\par

For each $M\in \mathcal{C}$, let $\widetilde{\mathcal{C}_M}$ be a maximal chain of closed ideals of $M/M_-$ (existence of such a chain follows from Zorn's lemma). Put $\mathcal{C}_M=\{\pi^{-1}(J):J\in\widetilde{\mathcal{C}_M}\}$. 
Then $\mathcal{C}_M$ consists of closed ideals of $X$ each of which contains $M_-$ and is contained in $M$. Since $\widetilde{S}=0$ on $M/M_-$, each member of $\mathcal{C}_M$ is invariant under $S$. Thus it is easily seen that 
$\mathcal{C}_1=\mathcal{C}\cup_{M\in\mathcal{C}}\mathcal{C}_M$ is a \textbf{chain} of closed ideals of $X$ each of which is invariant under $S$.\par

We claim that $\mathcal{C}_1$ is an ideal triangularizing chain for $S$. It remains to prove $\mathcal{C}_1$ is maximal as a chain of closed subspaces of $X$.
Suppose, otherwise, there exists a closed subspace $Y\notin\mathcal{C}_1$ such that $\mathcal{C}_1\cup\{Y\}$ is still a chain. Consider $M:=\cap_{J\in \mathcal{C},J\supset Y}J$ and $N:=\overline{\cup_{J\in \mathcal{C},J\subset Y}J}$. 
They are well defined since $\{0\},X\in\mathcal{C}$. It is easily seen that they are closed ideals of $X$ invariant under $T$. Since each member of $\mathcal{C}$ is comparable with $Y$, 
it is easy to see that each $J\in\mathcal{C}$ either is contained in $N$ or contains $M$.  Note also that $N\subset Y\subset M$. Hence, by maximality of $\mathcal{C}$, we have $M,N\in\mathcal{C}$. 
It also follows that $N=M$ or $M_-$, the predecessor of $M$ in $\mathcal{C}$. 
The first case is impossible, since it forces $Y=N=M\in\mathcal{C}\subset \mathcal{C}_1$. Hence, $M_-=N\subsetneqq Y\subsetneqq M$. Note that $Y/M_-$ is a closed subspace of $M/M_-$. 
Clearly, $Y/M_-\notin\widetilde{\mathcal{C}_M} $. Since $Y$ is comparable with each member of $\mathcal{C}_1$, $Y/M_-$ is comparable with each member of $\widetilde{\mathcal{C}_M}$, contradicting maximality of 
$\widetilde{\mathcal{C}_M}$, by Lemma~\ref{mchb}. This proves the claim.\par

Now note that for any $N\in \mathcal{C}_1$, we can find $M\in \mathcal{C}$ such that $M_-\subset N_-\subset N\subset M$, where $M_-$ is the predecessor of $M$ in $\mathcal{C}$ and $N_-$ is the predecessor 
of $N$ in $\mathcal{C}_1$. Since $\widetilde{S}=0$ on $M/M_-$, $\widetilde{S}=0$ on $N/N_-$. Hence, by Ringrose's theorem (Theorem~7.2.3, p.~156, \cite{rad}), $\sigma(S)=0$, i.e.~$S$ is quasi-nilpotent.
\end{proof}
\medskip
A recent preprint~\cite{drn2012} provides an independent proof of Theorem~\ref{quasi}.
\\
\medskip

\noindent\textbf{Acknowledgement.}
The author would like to thank the organizers of the conference ``Ordered Spaces and Applications'' held in Novermber 2011, National Technical University of Athens for the opportunity 
to present there the results of this paper.
The author is also grateful to his advisor Dr.~Vladimir G.~Troitsky for suggesting these problems and reading the manuscript.


\begin{thebibliography}{100}
\bibitem{sab}
Y.~Abramovich, C.~Aliprantis and O.~Burkinshaw, On the Spectral Radius of Positive Operators, \emph{Math.\ Z.}, 211, 1992, 593-607.

\bibitem{sab2}
Y.~Abramovich, C.~Aliprantis and O.~Burkinshaw, Invariant Subspaces of Operators on $\ell_p$-spaces, \emph{J.\ Funct.\ Anal.} 115 (1993), no.~2, 418-424.

\bibitem{sab1}
Y.~Abramovich, C.~Aliprantis and O.~Burkinshaw, Invariant Subspace Theorems for Positive Operators, \emph{J.\ Funct.\ Anal.} 124 (1994), no.~1, 95-11.

\bibitem{abr}
Y.~Abramovich and C.~Aliprantis, \emph{An Invitation to Operator Theory}, Graduate Studies in Mathematics, 50. American Mathematical Society, Providence, RI, 2002.

\bibitem{ali}
C.~Aliprantis and O.~Burkinshaw, \emph{Positive operators}, Springer, Dordrecht, 2006.

\bibitem{at}
R.~Anisca and V.~G.~Troitsky, Minimal vectors of positive operators, \emph{Indiana Univ.\ Math.\ J.} Vol.~54, no.~3, 2005, 861-872.

\bibitem{jbr}
J.~Bra\v{c}i\v{c}, R.~Drnov\v{s}ek, Y.~B.~Farforovskaya, E.~L.~Rabkin and J.~Zem\'{a}nek, {On Positive Commutators}, \emph{Positivity} (14), 2010, 431-439.

\bibitem{pag}
B.~de Pagter, Compact Irreducible Operators, \emph{Math.\ Z.}, 192, 1986, 149-153.

\bibitem{drn2000}
R.~Drnov\v{s}ek, Triangularizing semigroups of positive operators on an atomic normed Riesz space, \emph{Proc.\ Edinburgh Math.\ Soc.} 43 (2000), 43-55.

\bibitem{drn2001}
R.~Drnov\v{s}ek, Common invariant subspaces for collections of operators, \emph{Integral Equations Operator Theory}, Vol.~39, no.~3, 2001, 253-266.

\bibitem{drn2012}
R.~Drnov\v{s}ek, Once more on positive commutators, preprint

\bibitem{drn1}
R.~Drnov\v{s}ek and M.~Kandi\'{c}, {Ideal triangularizability of Semigroups of Positive Operators}, \emph{Integr.\ Equ.\ Oper.\ Theory} 64, 2009, 539-552.

\bibitem{drn2}
R.~Drnov\v{s}ek and  M.~Kandi\'{c}, {More on Positive Commutators}, \emph{J.\ Math.\ Anal.\ Appl.} 373, 2011, 580-584.

\bibitem{gao}
N.~Gao, Extensions of Perron-Frobenius Theory, submitted.

\bibitem{gt2}
N.~Gao and V.~Troitsky, Irreducible Semigroups of Positive Operators on Banach Lattices, submitted.

\bibitem{hali}
H.~E.~Gessesse, Invariant subspaces of super left-commutants, \emph{Proc.\ Amer.\ Math.\ Soc.} Vol.~137, no.~4, 2009. 1357-1361.

\bibitem{grob1}
J.~J.~Grobler, Band Irreducible Operators, \emph{Indag.\ Math.} 48, 1986, 405-409.

\bibitem{grob2}
J.~J.~Grobler, Spectral Theory in Banach Lattices, \emph{Operator Theory in Function Spaces and Banach Lattices}, Operator Theory Advances and Applications Vol.~75, Birkhauser Verlag, 1995.

\bibitem{kit}
A.~Kitover, {A generalized Jentzsch theorem}, \emph{Positivity} 9, 2005, 501-509.

\bibitem{ns}
F.~Niiro and I.~Sawashima, On the spectral properties of positive irreducible operators in an arbitrary Banach lattice and problems of H.~H.~Schaefer, \emph{Sci. Papers College Gen. Ed. Univ. Tokyo}, Vol.~16, 1966, 145-183.

\bibitem{radj}
H.~Radjavi, The Perron-Frobenius theorem revisited, \emph{Positivity} 3, no.~4, 1999, 317-331.

\bibitem{rad}
H.~Radjavi and P.~Rosenthal, \emph{Simultaneous Triangularization}, Springer, 2000.

\bibitem{relom}
G.~Sirotkin, A version of the Lomonosov invariant subspace theorem for real Banach spaces, \emph{Indiana Univ.\ Math.\ J.} 54, no. 1, 2005, 257–262. 

\bibitem{sawa}
I.~Sawashima, {On Spectral Properties of Some Positive Operators}, \emph{Natural Science Report}, Ochanomizu University, Vol. 15, No. 2, 1964, 53-64.

\bibitem{scha2}
H.~Schaefer, Some Spectral Properties of Positive Linear Operators, \emph{Pacific J.\ Math.} 10, 1960, 1009-1019.

\bibitem{scha3}
H.~Schaefer, \emph{Banach Lattices and Positive Operators}, Springer-Verlag, 1974.

\end{thebibliography}
\end{document}